      \numberwithin{equation}{section}
      \theoremstyle{plain}
      \newtheorem{theorem}{Theorem}[section]
      \newtheorem{lemma}[theorem]{Lemma}
      \theoremstyle{definition}
      \theoremstyle{remark}
      \newtheorem{remark}[theorem]{Remark}
      \newcommand{\E}{\mathbb E}
      \def\@setcopyright{}
      \def\serieslogo@{}
\title{Percolation of averages in the stochastic mean field model:\\ the near-supercritical regime}
\author{ Jian Ding\thanks{Partially supported by NSF grant DMS-1313596.} \\ University of Chicago \and Subhajit Goswami\footnotemark[1]  \\
University of Chicago
}
\begin{document}

 \maketitle

 \begin{abstract}
 For a complete graph of size $n$, assign each edge an i.i.d.\ exponential variable with mean $n$. 
 For $\lambda>0$, consider the length of the longest path whose average weight is at most 
 $\lambda$. It was shown by Aldous (1998) that the length is of order $\log n$ for $\lambda < 
 1/\mathrm{e}$ and of order $n$ for $\lambda > 1/\mathrm{e}$. In this paper, we study the near-
 supercritical regime where $\lambda = \mathrm{e}^{-1} +\eta$ with $\eta>0$ a small fixed number. 
 We show that there exist two absolute constants $C_1, C_2 > 0$ such that with high probability 
 the length is in between $n \mathrm{e}^{-C_1/\sqrt{\eta}}$ and $n \mathrm{e}^{-C_2/\sqrt{\eta}}$. 
 Our result corrects a non-rigorous prediction of Aldous (2005).\newline 
 
 \smallskip
 \noindent{\bf Key words and phrases.} Combinatorial optimization, stochastic distance model, 
 percolation.
 \end{abstract}

\section{Introduction}
Let $\mathcal W_n$ be a complete undirected graph of $n$ vertices where each edge is assigned an 
independent exponential weight with mean $n$; this is referred to as the \emph{stochastic mean-
field} ($\mbox{SMF}_n$) model. For a (self-avoiding) \emph{path} $\pi = (v_0, v_1,\ldots, v_m)$,  
define its length $len(\pi)$ and average weight $A(\pi)$ by
\begin{align*}
 len(\pi) = m \,, \mbox{ and }A(\pi) = \tfrac{1}{m}\mbox{$\sum_{i = 1}^m$} W_{(v_{i-1},v_i)}\,,
\end{align*}
where $W_{(u, v)}$ is the weight of the edge $(u, v)$. For $\lambda >0$, let $L(n,\lambda)$  be 
the length of the longest path with average weight below $\lambda$, i.e., 

$$L(n,\lambda) = \max\{len(\pi): A(\pi) \leq \lambda, \mbox{ }\pi\mbox{ is a path in 
$\mbox{SMF}_n$ model}\}\,.$$

In a non-rigorous paper of Aldous \cite{Aldous05}, it was predicted that $L(n, \lambda) \asymp n 
(\lambda - \mathrm{e}^{-1})^\beta$ with $\beta = 3$ $\lambda \downarrow \mathrm{e}^{-1}$. Our main 
result is the following theorem, which corrects Aldous' prediction.
\begin{theorem}
\label{Prop}
Let $\lambda = 1/\mathrm{e} + \eta$ where $\eta > 0$. Then there exist absolute constants $C_1, 
C_2 , \eta^*>0$ such that for all $\eta \leq \eta^*$,
\begin{equation}
\label{main_prop}
\lim_{n\to \infty}\mathbb{P}\big(n \mathrm{e}^{- C_1 / \sqrt{\eta}} \leq L(n,\lambda)\leq n 
\mathrm{e}^{- C_2 / \sqrt{\eta}}\big) = 1\,.
\end{equation}
\end{theorem}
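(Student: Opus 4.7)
The plan is to reduce both bounds to an analysis of the branching random walk (BRW) that captures the local structure of $\mbox{SMF}_n$. From any vertex of $K_n$, the edges of weight at most $t$ are (asymptotically in $n$) the points of a rate-$1$ Poisson point process on $[0,\infty)$ in the weight coordinate, so the exploration of light edges locally mimics a BRW whose offspring are the points of an independent rate-$1$ PPP on $[0,\infty)$. A one-line induction gives the key identity
\begin{equation*}
\mathbb{E}\bigl[\#\{\text{length-}m\text{ paths from the root with total weight}\le s\}\bigr] \;=\; \frac{s^m}{m!}\,,
\end{equation*}
which at $s=m\lambda$ with $\lambda=1/\mathrm{e}+\eta$ equals $\mathrm{e}^{\mathrm{e} m\eta}/\sqrt{2\pi m}\,(1+o(1))$ by Stirling; this recovers $\lambda=1/\mathrm{e}$ as the critical value and singles out $\eta$ as the natural supercriticality parameter.

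For the upper bound $L(n,\lambda)\le n\,\mathrm{e}^{-C_2/\sqrt{\eta}}$, the direct first moment over length-$L$ paths is $\sim n\,\mathrm{e}^{\mathrm{e} L\eta}/\sqrt{L}$, which is hopeless for the $L$ of interest, so I would argue via a \emph{profile-based} first moment. Any path with $A(\pi)\le\lambda$ has a centered cumulative-weight process $S_k-k\lambda$ that starts at $0$ and ends at most at $0$ after $L$ steps; this trajectory admits a ladder decomposition at some natural block scale $m_0=m_0(\eta)$, and I would classify paths by their ladder profile and count paths compatible with a given profile via BRW-excursion estimates at scale $m_0$. Summing over profiles and taking a union bound over lengths $L\ge n\,\mathrm{e}^{-C_2/\sqrt{\eta}}$ and starting vertices should drive the expected count of good paths of such lengths to $o(1)$.

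For the lower bound $L(n,\lambda)\ge n\,\mathrm{e}^{-C_1/\sqrt{\eta}}$, my plan is constructive: grow the path iteratively by \emph{block extensions} of length $m_0$. From the current endpoint I would explore the local BRW and search for a length-$m_0$ sub-path with average weight at most $\lambda-\eta/2$, the slack absorbing the joining cost. The matching lower side of the same near-critical BRW estimate gives success probability $\gtrsim \mathrm{e}^{-C_1/\sqrt{\eta}}$ per block, and since each block-search exposes only $o(n)$ edges, successive extensions can be decoupled from the past by a standard exploration coupling with a fresh BRW. Iterating until a constant fraction of the vertices has been used produces a good path of length $\ge n\,\mathrm{e}^{-C_1/\sqrt{\eta}}$.

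The main obstacle is the sharp two-sided estimate on the near-critical BRW excursion behind both bounds: the probability that the PPP-BRW contains a descendant at generation $m_0$ satisfying the average-weight constraint should behave like $\mathrm{e}^{-\Theta(1/\sqrt{\eta})}$, in contrast to the $\Theta(\eta)$ that one would naively read off from the survival probability of the corresponding Galton--Watson tree. The correct mechanism is an excursion estimate in the near-critical window where the available slack $m_0\eta$ is comparable with the intrinsic BRW front fluctuations, and this is precisely the regime where Aldous' prediction $L\asymp n\eta^3$ breaks down; matching the constants $C_1,C_2$ on the two sides in this window will be technically the most delicate step. A secondary difficulty in the lower bound is controlling the weak dependence between successive block extensions (shared vertices with the already-constructed path), which I expect to handle by a DFS-style reveal-on-demand procedure ensuring that each block-search operates on an essentially fresh portion of the graph up to a tolerable total-variation error.
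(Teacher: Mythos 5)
Your high-level picture — a block decomposition at a scale determined by $\eta$, the first-moment identity $\mathbb E[\#\{\text{length-}m\text{ paths from root},\ W\le s\}]=s^m/m!$, and the correct scaling $\mathrm{e}^{-\Theta(1/\sqrt\eta)}$ — is the right intuition, but the two mechanisms you propose both have genuine gaps.

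\textbf{Upper bound.} A ``profile-based first moment'' over long paths does not obviously converge, and the paper explicitly points out why: the direct first moment $\mathbb E N_\delta\asymp n\,\mathrm{e}^{\mathrm{e}\delta n\eta}$ diverges, and the dominant $\lambda$-light trajectories (near-straight lines just under slope $\lambda$) are not penalized enough by a pathwise profile constraint. The paper's argument is \emph{not} a refined first moment: it introduces a global budget. It counts ``downcrossings'' — subpaths of length $\ell\approx 1/\eta'$ whose total weight is $\sqrt{\ell}\sim 1/\sqrt{\eta'}$ below $\lambda\ell$ — and shows (i) the total number $N_{\eta'}$ of such subpaths in the whole graph concentrates around $\Theta\big(n\,\mathrm{e}^{-c/\sqrt{\eta}}\big)$, and (ii) conditionally on $A(\pi)\le\lambda$, the number of downcrossings along a length-$\delta n$ path stochastically dominates $\mathrm{Bin}(\delta n/2\ell,\,c)$ (Lemmas~\ref{drop_prob} and~\ref{slope_rise}). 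If $\delta n/\ell$ substantially exceeds $n\,\mathrm{e}^{-c/\sqrt\eta}$, the path would contain more downcrossings than exist globally, which is the contradiction. This ``global witness count'' step is what kills the $\mathrm{e}^{\mathrm{e}\delta n\eta}$ entropy, and it is missing from your outline. Note also that the relevant blocks must be $(\lambda,1)$-light (weight one full $\sqrt\ell$ below $\lambda\ell$), not just $\lambda$-light: merely $\lambda$-light blocks of length $1/\eta$ have first moment $\asymp n\sqrt{\eta}$, far too large to serve as a rare witness.

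\textbf{Lower bound.} Your per-block success probability estimate is internally inconsistent. For the constraint ``average weight $\le \lambda-\eta/2$'' at generation $m_0$, the first moment is $(1+\mathrm{e}\eta/2)^{m_0}/\sqrt{2\pi m_0}$, which is $\Theta(m_0^{-1/2})$ for $m_0\lesssim 1/\eta$ and $\gg 1$ for $m_0\gg 1/\eta$; at no scale $m_0$ does this become $\mathrm{e}^{-\Theta(1/\sqrt\eta)}$, so the claimed per-block probability does not match what the BRW supplies. More fundamentally, the iterative extension has no way to recover from a failed block: once the BRW from the current endpoint is explored and no good extension is found, you cannot retry from that endpoint, and discarding part of the built path reintroduces dependence you cannot control with a simple TV coupling. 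The paper avoids both problems with a two-stage construction: first a second-moment argument (crucially using a maximum-deviation constraint $M(\pi)\le \zeta_2/\sqrt\eta$, without which the second moment blows up — see Lemma~\ref{cond_dev}) plus Tur\'an's theorem produces a reservoir of $\sim f(\ell,\eta)n = n\,\mathrm{e}^{-\Theta(1/\sqrt\eta)}$ \emph{pairwise vertex-disjoint} good paths of length $\ell\sim \mathrm{e}^{c/\sqrt\eta}$; then a greedy algorithm strings $\delta n/\ell$ of them together via cheap length-2 bridges through a reserved vertex set of size $\zeta_1\eta n$, using the end-segment flexibility to guarantee the bridge weights stay $O(\ell\eta)$. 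The existence step is done once, up front, with high probability — there is no repeated ``failure with probability $1-\mathrm{e}^{-C/\sqrt\eta}$'' to absorb. Without some analogue of the pre-computed disjoint reservoir and the max-deviation control, I do not see how the iterative-growth version of your plan closes.
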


The study of the object $L(n, \lambda)$ was initiated by Aldous \cite{Aldous98} where a phase 
transition was discovered at the threshold $\mathrm{e}^{-1}$. It was shown that with high 
probability $L(n, \lambda)$ is of order $\log n$ for $\lambda < \mathrm{e}^{-1}$ and $L(n, 
\lambda)$ is of order $n$ when $\lambda > \mathrm{e}^{-1}$. The critical behavior was established  
in \cite{Ding13}, where it was proved that with high probability $L(n, \lambda)$ is of order 
$(\log n)^3$ when $\lambda$ is around $\mathrm{e}^{-1}$ within a window of order $(\log n)^{-2}$. 
Our Theorem~\ref{Prop} describes the behavior in the near-supercritical regime, and in 
particular states that $L(n, \lambda)/n$ is a stretched exponential in $\eta$ with $\eta = \lambda 
- \mathrm{e}^{-1} \downarrow 0$. Another interesting result proved in \cite{Ding13} states that 
$L(n, \lambda) \geq n^{1/4}$ in a somewhat similar regime namely $\lambda \geq 1 / \mathrm{e} 
+ \beta(\log n)^{-2}$, where $\beta > 0$ is an absolute constant. Notice that substituting $\eta = 
C(\log n)^{-2}$ in \eqref{main_prop}, we indeed get a fractional power of $n$. In fact our method 
should work, subject to some technical modifications, all the way down to $\eta = C(\log 
n)^{-2}$ for a large absolute constant $C$. However, we do not attempt any rigorous proof of 
this in the current paper. 

A highly related question is the length for the cycle of minimal mean weight, which was studied by 
Mathieu and Wilson \cite{MW13}. An interesting phase transition was found in \cite{MW13} with 
critical threshold $\mathrm{e}^{-1}$ on the mean weight. Further results on this problem have been 
proved in \cite{DSW}. It might be relevant to mention here that the method used in \cite{DSW} 
could be potentially useful for nailing down the second phase transition detected in 
\cite{Ding13}, namely the transition from $\eta = \alpha (\log n)^{-2}$ to $\eta = \beta (\log 
n)^{-2}$ where $\alpha, \beta$ are positive constants.

Another related question is the classical travelling salesman problem (TSP), where one minimizes 
the weight of the path subject to passing through every single vertex in the graph. For the TSP in 
the mean-field set up, W\"{a}stlund \cite{Wastlund10} established the sharp asymptotics for more 
general distributions on the edge weight, confirming the Krauth-M\'{e}zard-Parisi conjecture
\cite{MP86b, MP86, KM89}. Indeed, it is an interesting challenge to give a sharp estimate on $L(n, 
\lambda)$ for $\mathrm{e}^{-1} <\lambda < \lambda^*$ (here $\lambda^*$ is the asymptotic value for 
TSP), interpolating the critical behavior and the extremal case of TSP. A question of the same 
flavor on steiner tree is given in \cite{Bollobas04}.

One can also look at the maximum size of tree with average weight below a certain threshold, 
where a phase transition was proved in \cite{Aldous98}. The extremal case of the question on the 
tree with minimal average weight is the well-known minimal spanning tree problem, where a 
$\zeta(3)$ limit is established by Frieze \cite{Frieze85}.

\noindent {\bf Main ideas of our proofs.}  A straightforward first moment computation as done in 
\cite{Aldous98} implies that $\lim_{n \to \infty}\mathbb{P}\big(L(n, \lambda) = O(\log n)\big) = 
1$ when $\lambda < 1/\mathrm{e}$ (see also \cite[Theorem 1.3]{Ding13}). For $\lambda> 
1/\mathrm{e}$, a sprinkling method was employed in \cite{Aldous98} to show that with high 
probability $L(n, \lambda) = \Theta(n)$. The author first proved that with high probability there 
exist a large number of paths with average weight slightly above $1/\mathrm{e}$ and then used a 
certain greedy algorithm to connect these paths into a single long path with average weight 
slightly above $1/\mathrm{e}$. However, the method in \cite{Aldous98} was not able to describe the 
behavior at criticality. In \cite{Ding13} (see also \cite{MW13} for the cycle with minimal average 
weight), a second moment computation was carried out restricted to paths of average weight below 
$1/\mathrm{e}$ and with the maximal deviation (defined in \eqref{eq-def-M} below) at most $O(\log 
n)$, thereby yielding that with high probability $L(n, 1/\mathrm{e}) = \Theta((\log n)^3)$. A 
crucial fact responsible for the success of the second moment computation is that the length of 
the target path is $\Theta((\log n)^3) \ll \sqrt{n}$. As such, a straightforward adaption of this 
method would not be able to succeed in the regime considered by this paper. 

TSP, where one studies paths (cycles) that visit every single vertex, is in a sense analogous to 
the question of finding the minimal value $\lambda$ for which $L(n, \lambda) = n$ with high 
probability. W\"{a}stlund \cite{Wastlund10} showed that the minimum average cost of TSP converges 
in probability to a positive constant by relaxing it to a certain linear optimization problem. But 
it seems difficult to extend his method to ``incomplete'' TSP i.e. when the target object is the 
minimum cost cycle having at least $pn$ many edges for some $p \in (0,1)$. Since our problem is in 
a sense dual to incomplete TSP in the regime we are interested in, the method of \cite{Wastlund10} 
does not seem to be suitable for our purpose either. In the current work, our method is inspired 
by the (first and) second moment method from \cite{Ding13, MW13} as well as the sprinkling method 
employed in \cite{Aldous98}. 

In order to prove the upper bound, our main intuition is that if $L(n, \lambda)$ were greater than 
$\mathrm{e}^{-C_2/\sqrt{\eta}} n$ then we would have a larger number of short and light paths (a 
light path refers to a path with small average weight --- at most a little above $1/\mathrm{e}$) 
than we would typically expect. Formally, let $\ell = \frac{c_1}{\eta}$ where $c_1$ is a small 
positive constant, and consider the number of paths (denoted by $N_{\eta / c_1, c_2}$) with length 
$\ell$ and total weight no more than $\lambda \ell - c_2 \sqrt{\ell}$ for a positive constant 
$c_2$. We call such a path a \emph{downcrossing}. A straightforward computation gives $\mathbb E 
N_{\eta / c_1, c_2} = O(1) n \ell \mathrm{e}^{-c_3/\sqrt{\eta}}$ for a positive constant $c_3$ 
depending on $c_1$ and $c_2$. Now we consider the number of paths (denoted by $N_\delta$) of 
length $\delta(\lambda) n$ and average weight at most $\lambda$. Such paths have two 
possibilities: (1) The path contains substantially more than $\mathbb E N_{\eta / c_1, c_2}$ many 
downcrossings, which is unlikely by Markov's inequality. (2) The path does not have substantially 
more than $\mathbb E N_{\eta / c_1, c_2}$ many downcrossings. This is also unlikely for the 
following reasons: (a)  A straightforward first moment computation gives that $\mathbb E N_\delta  
= O(n) \mathrm{e}^{c_4 \delta n \eta}$ for a constant $c_4 > 0$; (b) The number of downcrossings 
along a path of this kind, or a random variable that is ``very likely'' smaller, should dominate a 
Binomial random variable $\mathrm{Bin}(\delta n/\ell, c_5)$ where $c_5 > 0$ is an absolute 
constant (since in the random walk bridge, every subpath of size $\ell$ has a positive chance to 
have such a downcrossing). If we choose $\delta$ suitably large as in Theorem~\ref{Prop}, we are 
suffering a probability cost for the constraint on the number of downcrossings (probability for a 
binomial much smaller than its mean) and this probability cost is of magnitude $\mathrm{e}^{-c_6 
\delta n /\ell}$ for a constant $c_6 > 0$ depending in $c_1$. If we choose $c_1$ small enough this 
probability cost kills the growth of $\mathrm{e}^{c_4 \delta n \eta}$ in $\mathbb E N_\delta$. 
Therefore, paths of this kind do not exist either. The details are carried out in 
Section~\ref{sec-upper}.

For the lower bound, our proof consists of two steps. In light of the preceding discussion, we 
cannot hope to directly apply a second moment method from \cite{Ding13, MW13} to show the 
existence of a light path that is of length linear in $n$. As such, in the first step of our proof 
we prove that with high probability there exists a linear (in $n$) number of disjoint paths, each 
of which has weight slightly below $\lambda$ and is of length $\mathrm{e}^{c_7/\sqrt{\eta}}$ for 
an absolute constant $c_7>0$. This is achieved by two second moment computations, which are 
expected to succeed as the length of the path under consideration is $\ll \sqrt{n}$ (indeed 
remains bounded as $n\to \infty$). In the second step, we propose an algorithm which, with 
probability going to 1, strings together a suitable collection of these short light paths to form 
a light path of length $\mathrm{e}^{-c_8/\sqrt{\eta}}n$ for an absolute constant $c_8>0$. Our 
algorithm is similar to the greedy algorithm (or in a different name exploration process) employed 
in \cite{Aldous98}. But in order to ensure that the additional weight introduced by these 
connecting bridges only increases the average weight of the final path by at most a multiple of 
$\eta$, we have to use a more delicate algorithm. The details are carried out in Section~\ref{sec-lower}.

\noindent {\bf Notation convention.} 
 For a graph $G$, we denote by $V(G)$ and $E(G)$ the set of vertices and edges of $G$ 
 respectively. A path in a graph $G$ is an (finite) ordered tuple of vertices $(v_0, v_1, \cdots, 
 v_m)$, all distinct. For a path $\pi = (v_0, v_1, \cdots, v_m)$, we also use $\pi$ to denote the 
 graph whose vertices are $v_0, v_1, \cdots, v_m$ and edges are $(v_0, v_1), \cdots, (v_{m-1}, 
 v_m)$. This would be clear from the context. The weight of an edge $e$ in $\mathcal{W}_n$ is denoted by $W_e$ and we define the  total weight $W(\pi)$ of a path $\pi$ as $\sum_{e \in E(\pi)} W_{e}$. The collection of all paths in $\mathcal{W}_n$ of length $\ell \in [n]$ is denoted as $\Pi_\ell$. We let $\lambda  = 1/\mathrm{e} + \eta$ where $\eta$ is a  fixed positive number. A  path is called \emph{$\lambda$-light} if its average weight is at most  $\lambda$, and a path is  called  \emph{$(\lambda, C)$-light} if its total weight is at most $\lambda \ell - C\sqrt{\ell}$ where  $\ell$ is length of 
 the path. For nonnegative real or integer valued variables $x_0, x_1, \cdots, x_n$, let $S$ be a 
 statement involving $x_0, x_1, \cdots, x_n$. We say that $S$ holds ``for large  $x_0$ (given 
 $x_1, \cdots, x_n$)'' or ``when $x_0$ is large (given $x_1, \cdots, x_n$)'' if it  holds for any 
 fixed values of $x_1, \cdots, x_n$ in their respective domains and $x_0 \geq a_0$  where $a_0$ is 
 some positive number depending on the fixed values of $x_1, \cdots, x_n$. In case  $a_0$ is an 
 absolute constant, the phrase ``(given $x_1, \cdots, x_n$)'' will be dropped. We use  ``for small 
 $x_0$'' or ``when $x_0$ is small'' with or without the qualifying phrase ``(given  $x_1, x_2, 
 \cdots, x_n$)'' in similar situations if the statement $S$ holds instead for $0 < x_0  \leq a_0$. 
 Throughout this paper the order notations $O(.), \Theta(.), o(.)$ etc.~are assumed to be with 
 respect to $n \to \infty$ while keeping all the other involved parameters (such as $\ell$, $\eta$ 
 etc.) fixed. We will use $C_1, C_2, \ldots$ to denote constants, and each  $C_i$ will denote the 
 same number throughout of the rest of the paper.
\smallskip

\noindent {\bf Acknowledgements.} We are grateful to David Aldous for very useful discussions, and 
we thank an anonymous referee for a careful review of an earlier manuscript and suggesting a 
simpler proof of Lemma~\ref{count_vertices}.

\section{Proof of the upper bound}
\label{sec-upper}

Let $\eta'$ be a multiple of $\eta$ by a constant bigger than 1 whose precise value is to be 
selected. Set $\ell = \lfloor 1 / \eta' \rfloor$ and let $N_{\eta'}$ be the number of ``$(\lambda, 
1)$-light'' paths of length $\ell$. We assume $\eta < 1$ so that $\ell \geq 1$. As outlined in the 
introduction, we shall first control $N_{\eta'}$. 

It is clear that the distribution of the total weight of a path of length $k$ follows a Gamma 
distribution $\Gamma(k, 1/n)$, where the density $f_{\theta, k}(z)$ of $\mathrm{Gamma}(k, \theta)$ 
is given by
\begin{equation}
\label{gamma_density}
f_{\theta, k}(z) = \theta^k z^{k - 1}\mathrm{e}^{-\theta z} / (k - 1)!\mbox{ for all }z \geq 0, 
\theta > 0\mbox{ and }k\in \mathbb{N}.
\end{equation}
By \eqref{gamma_density} and the Stirling's formula, we carry out a straightforward computation 
and get that
\begin{eqnarray}
\mathbb{E} N_{\eta'} & = & (1 + o(1)) \times n^{\ell+1} \times \mathbb{P}\Big ( \mathrm{Gamma}
(\ell,1/n) \leq \lambda \ell - \sqrt{\ell} \Big ) \nonumber\\
			& = & (1 + o(1)) \times n^{\ell+1}\times \frac{\mathrm{e}^{-(\lambda \ell - 
			\sqrt{\ell})/n}(\lambda \ell - \sqrt{\ell})^{\ell}}{\ell!n^{\ell}}\nonumber\\
\label{first_bnd}& = & (1 + o(1))C_0(\eta)\alpha\mathrm{e}^{\mathrm{e}\eta/\eta'} \sqrt{\eta'} 
\mathrm{e}^{-\mathrm{e}/\sqrt{\eta'}}n,
\end{eqnarray}
where $C_0(\eta) \to 1$ as $\eta \to 0$, and $\alpha$ is a positive constant. Furthermore the 
factors $1 + o(1)$ are strictly less than 1.
\\
We also need a bound on the second moment of $N_{\eta'}$ to control its concentration around 
$\mathbb{E}N_{\eta'}$. For $\gamma \in \Pi_\ell$, define $F_{\gamma}$ to be the event that 
$\gamma$ is $(\lambda, 1)$-light. Then clearly we have $N_{\eta'} = \sum_{\gamma \in 
\Pi_\ell}\mathbf{1}_{F_{\gamma}}$. In order to compute $\E (N_{\eta'})^2$, we need to estimate 
$\mathbb{P}(F_{\gamma} \cap F_{\gamma'})$ for $\gamma, \gamma' \in \Pi_\ell$. In the case 
$E(\gamma) \cap E(\gamma') = \emptyset$, we have $F_\gamma$ and $F_{\gamma'}$ independent of each 
other and thus $\mathbb{P}(F_{\gamma'}|F_{\gamma}) = \mathbb{P}(F_{\gamma'})$. In the case $|
E(\gamma) \cap E(\gamma')| = j > 0$, we have
\begin{eqnarray}
\label{cond_prob_order}
\mathbb{P}(F_{\gamma'}|F_{\gamma}) &\leq & \mathbb{P}\big(\mathrm{Gamma}(\ell-j,1/n) \leq \lambda 
\ell\big)
                                       \leq \tfrac{1}{(\ell - j)!}\tfrac{(\lambda \ell)^{\ell - 
                                       j}}{n^{\ell - j}}.
\end{eqnarray}
Further notice that if $|E(\gamma) \cap E(\gamma')| = j$, then $|V(\gamma) \cap V(\gamma')|$ is at 
least $j + 1$ as $\gamma \cap \gamma'$ is acyclic. So given any $\gamma \in \Pi_\ell$, the number 
of paths $\gamma'$ such that $|E(\gamma) \cap E(\gamma')| = j$ is at most $O(n^{\ell - j})$.  
Altogether, we obtain that
\begin{align}
\mathbb{E} N_{\eta'}^2 & = \mbox{$\sum_{\gamma, \gamma' \in \Pi_\ell}$}\mathbb{P}(F_{\gamma}\cap 
F_{\gamma'})  = \mbox{$\sum_{\gamma \in \Pi_\ell}$}\mathbb{P}(F_{\gamma})\mbox{$\sum_{\gamma' \in 
\Pi_\ell}$}\mathbb{P}(F_{\gamma'}|F_{\gamma}) \nonumber \\
                          & \leq \mbox{$\sum_{\gamma \in \Pi_\ell}$}\mathbb{P}
                          (F_{\gamma})\Big(\sum_{\gamma':E(\gamma' \cap 
                          \gamma)=\emptyset}\mathbb{P}(F_{\gamma'}) + \sum_{1 \leq j \leq 
                          \ell} \sum_{\gamma':|E(\gamma' \cap \gamma)|=j}\frac{1}{(\ell - 
                          j)!}\frac{(\lambda \ell)^{\ell - j}}{n^{\ell - j}}\Big) \nonumber\\
                          & = \mbox{$\sum_{\gamma \in \Pi_\ell}$}\mathbb{P}
                          (F_{\gamma})\Big(\sum_{\gamma':E(\gamma' \cap 
                                                    \gamma)=\emptyset}\mathbb{P}(F_{\gamma'}) + 
                                                    \sum_{1 \leq j \leq 
                                                    \ell}\frac{O(n^{\ell - j})}{(\ell - 
                                                    j)!}\frac{(\lambda \ell)^{\ell - j}}{n^{\ell - 
                                                    j}}\Big) \nonumber\\
                          &\leq \mbox{$\sum_{\gamma \in \Pi_\ell}$}\mathbb{P}
                          (F_{\gamma})\Big(\mathbb{E}N_{\eta'} + O(1) \Big) = 
                          \mathbb{E}N_{\eta'}\Big(\mathbb{E}N_{\eta'} + O(1) \Big).  
                          \label{first_second_moment}
\end{align}
Since $\E N_{\eta'} = \Omega(1)$ as implied by \eqref{first_bnd}, \eqref{first_second_moment} 
yields that
\begin{equation}
\label{concentration_1}
\mathbb{E}N_{\eta'}^2 = (\mathbb{E}N_{\eta'})^2 (1 + o(1)).
\end{equation}
As a consequence of Markov's inequality (applied to $|N_{\eta'} - \E N_{\eta'} |^2$), we get that
\begin{equation}
\label{concentration_2}
\mathbb{P}\big(N_{\eta'} \geq 2 \mathbb{E}N_{\eta'}\big) = o(1).
\end{equation}

Next, we set out to show that any long $\lambda$-light path should have a large number of subpaths 
which are $(\lambda, 1)$-light. Let $\pi$ be a path of length $\delta n$ for some $\delta > 
0$. Denote its successive edge weights by $X_1, X_2, \ldots X_{\delta n}$ and let $S_k = 
\sum_{i=1}^k X_i$ for $1\leq k\leq \delta n$. Probabilities of events involving edge weights of 
$\pi$, unless specfically mentioned, will be assumed to be conditioned on ``$\{A(\pi) \leq 
\lambda\}$'' throughout the remainder of this section. Now divide $\pi$ into edge-disjoint 
subpaths of length $\ell$ (with the last subpath of length possibly less than $\ell$ in the case 
$\ell$ does not divide $\delta n$) and denote the $k$-th subpath by $b_k^{\pi}$ for $1 \leq k 
\leq \delta n / \ell$. Call any such subpath a downcrossing if it is $(\lambda, 1)$-light. Let 
$D_{k, \eta', n}^{\pi}$ be the event that $b_k^\pi$ is a downcrossing. The following 
well-known result about exponential random variables (see, e.g., \cite[Theorem 6.6]{Dasgupta2011}) 
will be very useful.

\begin{lemma}
\label{Beta}
Let $W_1, W_2, \ldots, W_N$ be i.i.d.\ exponential random variables with mean $1/\theta$,  and let 
$S_k = \sum_{i=1}^k W_i$ for $1\leq k\leq N$. Then the random vector $(\frac{W_1}{S_N},\ldots, 
\frac{W_{N-1}}{S_N})$ follows $\mathrm{Dirichlet}(\mathbf{1}_{N})$ distribution, $S_N$ follows 
$\mathrm{Gamma}(N;\theta)$ distribution, and they are independent of each other. Here 
$\mathbf{1}_{N}$ is the $N$-dimensional vector whose all entries are $1$.
\end{lemma}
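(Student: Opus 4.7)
The plan is to prove this classical identity by a single change of variables, from $(W_1,\dots,W_N)$ to $(Y_1,\dots,Y_{N-1},S_N)$ where $Y_i := W_i / S_N$, and to read off all three conclusions from the resulting joint density.

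First I would write the joint density of $(W_1,\dots,W_N)$, which by independence is simply $\theta^N \exp(-\theta(w_1+\cdots+w_N)) = \theta^N \exp(-\theta s_N)$ on the positive orthant. The crucial observation is that this density depends on the $w_i$'s only through their sum $s_N$, which is exactly what will eventually produce the independence. Next I would set up the change of variables: the inverse map sends $(y_1,\dots,y_{N-1},s)$ to $w_i = y_i s$ for $i \leq N-1$ and $w_N = s(1 - \sum_{i=1}^{N-1} y_i)$, with the new variables ranging over the open simplex $\Delta_{N-1} = \{y_i > 0,\ \sum y_i < 1\}$ and $s > 0$.

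The main calculation is the Jacobian. The matrix of partial derivatives is block-triangular except for the last row, and a short expansion (or a row-reduction) yields $|J| = s^{N-1}$. Substituting, the joint density of $(Y_1,\dots,Y_{N-1},S_N)$ becomes
\begin{equation*}
\theta^N e^{-\theta s}\, s^{N-1}\, \mathbf{1}_{\Delta_{N-1}}(y)\, \mathbf{1}_{\{s>0\}}
\;=\; \underbrace{(N-1)!\, \mathbf{1}_{\Delta_{N-1}}(y)}_{\text{Dirichlet}(\mathbf{1}_N)\text{ density}} \;\times\; \underbrace{\frac{\theta^N s^{N-1} e^{-\theta s}}{(N-1)!}\, \mathbf{1}_{\{s>0\}}}_{\text{Gamma}(N,\theta)\text{ density}}.
\end{equation*}
Since the joint density factors as a function of $y$ alone times a function of $s$ alone, and each factor is a probability density, the three claims follow simultaneously: $(Y_1,\dots,Y_{N-1})$ is $\mathrm{Dirichlet}(\mathbf{1}_N)$, $S_N$ is $\mathrm{Gamma}(N,\theta)$, and the two are independent.

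There is no real obstacle here beyond bookkeeping: the argument is a textbook change of variables and the only thing to be careful about is the Jacobian computation and the fact that the $N$-th ``coordinate'' $W_N/S_N = 1 - \sum_{i\le N-1} Y_i$ is a deterministic function of the remaining ones, so only $N-1$ of the ratios are taken as independent variables (which is why the Dirichlet is parametrized by an $N$-vector of ones but lives on an $(N-1)$-dimensional simplex).
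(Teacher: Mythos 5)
Your proof is correct, and the Jacobian computation (adding the first $N-1$ rows to the last to obtain an upper-triangular matrix with determinant $s^{N-1}$) and the resulting factorization of the density are all as they should be. Note, however, that the paper does not actually give a proof of this lemma: it cites it as a well-known fact from a textbook (Dasgupta, Theorem~6.6), so there is no ``paper's approach'' to compare against. Your change-of-variables argument is the standard textbook proof and fills in exactly what the cited reference contains.
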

We will also require the following simple lemma which we prove for sake of completeness. 
\begin{lemma}
\label{Azuma}
Let $Z_1, Z_2, \ldots, Z_N$ be i.i.d.\  exponential random variables with mean 1 and let  $S_N = 
\sum_{i = 1}^N S_N$. Then
\begin{align}
\mathbb{P}(S_N \geq N + \alpha) &\leq \mathrm{e}^{-\alpha^2/4N} \mbox{ for all } 0<\alpha \leq (2 
- \sqrt{2})N\,, \label{Azuma1}\\
\mathbb{P}(S_N \leq N - \alpha) &\leq \mathrm{e}^{-\alpha^2/2N}\,, \mbox{ for all } \alpha>0\,. 
\label{Azuma2}
\end{align}
\end{lemma}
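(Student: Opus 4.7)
The plan is to prove both tail bounds via the standard exponential Markov (Chernoff) argument, exploiting the explicit moment generating function of an exponential random variable. For $Z \sim \mathrm{Exp}(1)$ one has $\mathbb{E}[e^{tZ}] = (1-t)^{-1}$ for $t < 1$ and $\mathbb{E}[e^{-sZ}] = (1+s)^{-1}$ for $s > -1$. By independence, $\mathbb{E}[e^{tS_N}] = (1-t)^{-N}$ and $\mathbb{E}[e^{-sS_N}] = (1+s)^{-N}$ on the respective domains, and all that remains is to choose a good exponential parameter and a convenient Taylor-type bound on $\pm\log(1 \mp t)$.

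For the lower tail, I would apply Markov's inequality to $e^{-sS_N}$ with $s > 0$, obtaining
\[
\mathbb{P}(S_N \leq N - \alpha) \;\leq\; \exp\bigl(s(N-\alpha) - N\log(1+s)\bigr).
\]
The elementary bound $\log(1+s) \geq s - s^2/2$ on $[0,\infty)$ (verified by noting that $g(s) = \log(1+s) - s + s^2/2$ vanishes at $0$ and satisfies $g'(s) = s^2/(1+s) \geq 0$) reduces the exponent to $-s\alpha + Ns^2/2$. Optimizing at $s = \alpha/N$ yields exponent $-\alpha^2/(2N)$, giving the desired bound for all $\alpha > 0$ (the case $\alpha \geq N$ being trivial since $S_N \geq 0$).

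For the upper tail, I would apply Markov's inequality to $e^{tS_N}$ with $t \in (0,1)$, getting
\[
\mathbb{P}(S_N \geq N + \alpha) \;\leq\; \exp\bigl(-t(N+\alpha) - N\log(1-t)\bigr),
\]
and then invoke the Taylor-type inequality $-\log(1-t) \leq t + t^2$ on $[0, 1/2]$, which reduces to checking $1/(1-t) \leq 1 + 2t$, equivalently $t(1-2t) \geq 0$. The exponent is thus bounded by $-t\alpha + Nt^2$; minimizing at $t = \alpha/(2N)$ gives $-\alpha^2/(4N)$. The constraint $t \leq 1/2$ translates to $\alpha \leq N$, which is less restrictive than the stated $\alpha \leq (2-\sqrt{2})N$, so the quoted range is comfortably covered.

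No serious obstacle is anticipated here; the only mild subtlety is selecting Taylor-type surrogates for $\mp\log(1 \mp t)$ that are simultaneously tight enough to yield Gaussian-type exponents and valid on the full optimization interval. The slightly conservative threshold $(2-\sqrt{2})N$ in the statement apparently reflects a preferred choice of such surrogate (e.g., truncating the series for $-\log(1-t)$ after the cubic term), rather than a genuine barrier in the method.
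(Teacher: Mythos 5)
Your proof is correct and follows essentially the same Chernoff-plus-Taylor-surrogate route as the paper: the paper also applies Markov's inequality to $e^{\theta S_N}$ using $\mathbb{E}[e^{\theta S_N}] = (1-\theta)^{-N}$, bounds $-\log(1-\theta) \leq \theta + \theta^2$ on a sufficient interval, and sets $\theta = \alpha/2N$ (the paper states the sufficient interval as $\theta \leq 1 - 1/\sqrt{2}$, which is what produces the threshold $(2-\sqrt{2})N$, whereas you verify the slightly larger interval $\theta \leq 1/2$). Your explicit treatment of the lower tail with $\log(1+s) \geq s - s^2/2$ matches what the paper leaves to the reader as ``the same manner.''
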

\begin{proof}
By Markov's inequality, we get that for any $\alpha > 0$ and $0 < \theta < 1$,
$$\mathbb{P}(S_N \geq N + \alpha) = \mathbb P(e^{\theta S_N} \geq e^{\theta (N+ \alpha)}) \leq 
\tfrac{\mathrm{e}^{-\theta N - \theta \alpha}}{(1-\theta)^N}.$$
When $ \theta \leq 1 - 1/\sqrt{2}$, the right hand side is bounded above by $\mathrm{e}^{N  
\theta^2 - \alpha\theta}$. So setting $\theta = \alpha / 2N$ yields \eqref{Azuma1} as long as $0 < 
\alpha / 2N \leq 1 - 1/\sqrt{2}$. One can prove \eqref{Azuma2} in the same manner.
\end{proof}
 
As hinted in the introduction, let us begin with the intent to prove that the number of 
downcrossings along the first half of $\pi$ (or any fraction of it) dominates a Binomial random 
variable $\mathrm{Bin}(\delta n / 2\ell, p)$ for some positive, absolute constant $p$. So 
essentially we need to prove that a subpath $b_k^{\pi}$ can be a downcrossing with probability $p$ 
regardless of the first $(k-1)\ell$ edges of $\pi$ that precede it. Now conditional distribution 
of $X_{(k-1)\ell + 1}, X_{(k-1)\ell + 2}, \cdots, X_{\delta n}$ given $X_1, X_2, \cdots, 
X_{(k-1)\ell}$ and $A(\pi) \leq \lambda$ is essentially the the distribution of $X_{(k-1)\ell + 
1}, X_{(k-1)\ell + 2},$ $\cdots, X_{\delta n}$ conditioned on $\sum_{i = (k - 1)\ell + 1}^{\delta 
n}X_i \leq \lambda \delta n - S_{(k - 1)\ell}$. On the other hand we get from Lemma~\ref{Beta} 
that conditional mean and variance of $W(b_k^\pi)$ given $S_{\delta n} - S_{(k-1)\ell} = \mu 
(\delta n - (k-1)\ell)$ are $\mu \ell$ and $\mu^2 \ell(1 + o(1))$ respectively for all $\mu > 0$ 
and $k \leq \delta n / 2$. Hence it is plausible to expect that probability of the event 
$\{W(b_k^{\pi}) \leq \Lambda_k(\ell - C\sqrt{\ell})\}$ conditional on any set of values for $X_1, 
X_2, \cdots, X_{(k - 1)\ell}$ is bounded away from 0 for large $\ell$ and $n$, where $\Lambda_k 
= \Lambda_k^\pi = (S_{\delta n} - S_{(k-1)\ell})/(\delta n - (k-1)\ell)$ and $C$ is some positive 
number. Let us denote the event $\{W(b_k^{\pi}) \leq \Lambda_k(\ell - C\sqrt{\ell})\}$ by $A_{k, 
\eta', \delta, n}^{C, \pi}$. Thus it seems more immediate to prove the stochastic domination for 
number of occurrences of $A_{k, \eta', n}^{C, \pi}$'s which, for the time being, can be treated as 
a ``proxy'' for the number of downcrossings. The formal statement is given in the next lemma where 
we use $6$ as the value of $C$ since this allows us to avoid unnecessary named variables and also 
suits our specific needs for the computations carried out at the end of this section.

\begin{lemma}
\label{drop_prob}
Let $N_{\eta', n}^\pi$ be the number of occurrences of events $A_{k, \eta', n}^{\pi} = 
A_{k, \eta', n}^{6, \pi}$ for $1 \leq k \leq \delta n / 2\ell$. Then for any $0 < \eta' < \eta_0$ 
where $\eta_0$ is a positive, absolute constant and any $0 < \delta_0 < 1$ there exists a positive 
integer $n_d = n_d(\delta_0, \eta')$ and an absolute constant $c > 0$ such that for all $\delta 
\geq \delta_0$ and $n \geq n_d $ the conditional distribution of $N_{\eta',n}^\pi$ given 
$\{A(\pi) \leq \lambda\}$ stochastically dominates the binomial distribution $\mathrm{Bin}(\delta 
n /2\ell, c)$. 
\end{lemma}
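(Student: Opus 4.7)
The strategy is to produce an absolute lower bound $c > 0$ on the conditional probability $\mathbb{P}(A_{k,\eta',n}^{\pi} \mid X_1,\ldots,X_{(k-1)\ell},\, A(\pi)\le\lambda)$ that holds almost surely and uniformly in $k\le \delta n/2\ell$. Once such a bound is in hand, the stochastic domination follows by a standard step-by-step coupling in which the indicator of $A_{k,\eta',n}^{\pi}$ is coupled so as to dominate an independent $\mathrm{Bernoulli}(c)$ variable; summing over $k$ gives the claimed $\mathrm{Bin}(\delta n/2\ell,\,c)$. The substance of the proof is therefore the per-step bound.

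First I would isolate the randomness relevant to step $k$. Since $X_{(k-1)\ell+1},\ldots,X_{\delta n}$ are i.i.d.\ exponentials independent of the past, Lemma~\ref{Beta} applied to these variables implies that
$$R := W(b_k^\pi)/(S_{\delta n}-S_{(k-1)\ell}) \sim \mathrm{Beta}(\ell,\, \delta n - k\ell)$$
is independent of both the past $\sigma(X_1,\ldots,X_{(k-1)\ell})$ and the remaining total $T := S_{\delta n}-S_{(k-1)\ell}$. The event $A_{k,\eta',n}^{\pi}$ rewrites as $\{R\le r_k\}$ with the deterministic threshold $r_k := (\ell - 6\sqrt{\ell})/(\delta n - (k-1)\ell)$, while $\{A(\pi)\le\lambda\}$ is measurable with respect to $\sigma(\mathrm{past}, T)$. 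Independence of $R$ from $(\mathrm{past},T)$ therefore gives $\mathbb{P}(A_{k,\eta',n}^{\pi}\mid\mathrm{past},A(\pi)\le\lambda) = \mathbb{P}(R\le r_k)$ almost surely, reducing matters to a purely distributional estimate on a fixed Beta.

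To lower bound $\mathbb{P}(R\le r_k)$, I would use the representation $R = G_1/(G_1+G_2)$ with independent $G_1\sim\mathrm{Gamma}(\ell,1)$, $G_2\sim\mathrm{Gamma}(\delta n - k\ell,1)$, turning $\{R\le r_k\}$ into the cross-multiplied inequality $G_1(\delta n - k\ell + 6\sqrt{\ell}) \le G_2(\ell - 6\sqrt{\ell})$. I would then consider the intersection of two independent events, one controlling each Gamma:
$$E_1 := \{G_2 \ge (\delta n - k\ell)(1-\ell^{-1/2})\}, \qquad E_2 := \{G_1 \le \ell - 7\sqrt{\ell}\}.$$
A short arithmetic check shows that on $E_1\cap E_2$ the displayed inequality holds as soon as $\delta n - k\ell \ge \ell^{3/2}$, which is guaranteed for $n\ge n_d(\delta_0,\eta')$ since $\delta n - k\ell \ge \delta_0 n/2$. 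Lemma~\ref{Azuma} with $\alpha = (\delta n - k\ell)/\sqrt{\ell}$ yields $\mathbb{P}(E_1)\ge 1 - e^{-(\delta n - k\ell)/(2\ell)} = 1-o(1)$, uniformly in $k$. For $\mathbb{P}(E_2)$ one appeals to a quantitative central limit theorem for sums of i.i.d.\ exponentials (Berry--Esseen gives $|\mathbb{P}(G_1\le\ell+x\sqrt{\ell})-\Phi(x)|\le C/\sqrt{\ell}$), which produces $\mathbb{P}(E_2) \ge \tfrac12\Phi(-7) =: c_0 > 0$ for all $\ell$ beyond an absolute threshold, i.e.\ for $\eta' < \eta_0$ with $\eta_0$ sufficiently small. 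Combining gives $\mathbb{P}(R\le r_k) \ge c_0/2 =: c$, completing the per-step bound.

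The main obstacle is extracting the left-tail lower bound in $E_2$ with a constant independent of $\ell$: Lemma~\ref{Azuma} supplies only upper tail estimates, so one is forced to invoke a quantitative normal approximation or to work directly with the Gamma density, and one must be careful that the error from this approximation does not swallow $\Phi(-7)$ (which motivates the gap between the constants $6$ and $7$ that creates slack in the arithmetic bounding $E_1\cap E_2\subseteq A_{k,\eta',n}^{\pi}$). Everything else---the Dirichlet reduction, the concentration of $G_2$, the numerical comparison, and the final iterative coupling---is routine bookkeeping.
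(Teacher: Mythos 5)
Your proposal is correct and follows essentially the same route as the paper: reduce to a per-step lower bound on the probability of a downcrossing-like event via the Dirichlet/Beta independence of Lemma~\ref{Beta}, split into a concentration event for the bulk sum (handled by Lemma~\ref{Azuma}) and a left-tail event for the $\mathrm{Gamma}(\ell,1)$ variable (handled by CLT/Berry--Esseen), and intersect. The only cosmetic differences are that you use the explicit $G_1/(G_1+G_2)$ representation with a product bound over independent events whereas the paper works with $S_\ell/S_L$ and a subtraction bound, and that you invoke Berry--Esseen quantitatively where the paper cites the CLT qualitatively; you also spell out the coupling step that the paper leaves implicit.
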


\begin{proof}
Notice that it suffices to prove that there exist positive absolute constants $\ell_0, c$ such 
that uniformly for $\mu > 0$, $\ell \geq \ell_0$ and large $L$ (given $\ell$)
$$\mathbb{P}(S_{\ell} \leq \tfrac{S_L}{L}(\ell - 6\sqrt{\ell}) | S_L = \mu L)\geq c\,.$$  To this 
end, we see that for $L > \ell$
\begin{eqnarray}
\mathbb{P}(S_{\ell} \leq \tfrac{S_L}{L}(\ell - 6\sqrt{\ell}) | S_L = \mu L)  = \mathbb{P}
(\tfrac{S_{\ell}}{S_L} \leq (\ell - 6\sqrt{\ell})/L | S_L = \mu L) = \mathbb{P}(\tfrac{S_{\ell}}
{S_L} \leq (\ell - 6\sqrt{\ell})/L) \,,\label{ratio_prob} 
\end{eqnarray}
where the last equality follows from Lemma~\ref{Beta}. Since distribution of $\frac{S_{\ell}}
{S_L}$ does not depend on the mean of the underlying $X_j$'s, we can in fact assume that $X_j$'s 
are i.i.d.\ exponential variables with mean 1 for purpose of computing \eqref{ratio_prob}. By 
\eqref{Azuma2}, we have 
$$\mathbb{P}\big(S_L/L \leq 1 - 1/(2\sqrt{\ell})\big) \leq \mathrm{e}^{-L / 8 \ell}\,.$$
 So for $\ell - 6\sqrt{\ell} > 0$, we get
\begin{equation}
\label{prob_ineq}
\mathbb{P}(S_{\ell} \leq \tfrac{S_L}{L}(\ell - 6\sqrt{\ell})) 
\geq \mathbb{P}(S_{\ell} \leq \ell - 6.5\sqrt{\ell}) - \mathrm{e}^{-L / 8 \ell}.
\end{equation}
By central limit theorem there exist absolute numbers $\ell_0, c'>0$ such that $\mathbb{P}
(S_{\ell} \leq \ell - 6.5\sqrt{\ell}) \geq c'$ for $\ell \geq \ell_0$. Hence from 
\eqref{prob_ineq} it follows that for any $\ell \geq \ell_0$ there exists $L_0 = L_0(\ell)$ such 
that the right hand side of \eqref{ratio_prob} is at least $c = 0.99c'$ for $L \geq L_0$.
\end{proof}
Now what remains to show is that the number of downcrossings $\tilde N_{\eta', n}^\pi$ 
along $\pi$ is bigger than $N_{\eta', n}^\pi$ with high probability. Notice that the 
occurrence of $A_{k, \eta', n}^{\pi} \setminus D_{k, \eta', n}^{\pi}$ implies that 
$\Lambda_k$ must be ``significantly'' above $\lambda$. But that can only be caused by a 
substantial drop in $S_k$ for some $1 \leq k \leq \delta n /2$, an event that occurs with small 
probability.
\begin{lemma}
\label{slope_rise}
Denote by $E_{\eta', n}^\pi$ the event that $\Lambda_k$ is more than $\lambda + \sqrt{\eta'}$ for 
some $1 \leq k \leq \frac{\delta n}{2\ell}$. Then for any $0 < \eta' < 1/4$ and $0 < \delta_0 < 1$ 
there exists a positive integer $n_s = n_s(\delta_0, \eta')$ such that,
\begin{equation}
\label{Ineq2}
\mathbb{P}(E_{\eta', n}^\pi| A(\pi) \leq \lambda) \leq 2n\mathrm{e}^{-\delta n\eta' / 16} \mbox{ 
for all } \delta \geq \delta_0 \mbox{ and }n \geq n_s\,.
\end{equation}
\end{lemma}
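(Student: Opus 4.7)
The plan is to translate $E_{\eta',n}^\pi$ into a deviation bound for partial sums, condition on $S_{\delta n}$ in order to invoke the Dirichlet structure of Lemma~\ref{Beta}, and conclude via a standard Chernoff-type bound for uniform order statistics, followed by a union bound.

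Step one is to rewrite $\{\Lambda_k > \lambda + \sqrt{\eta'}\}$ as $\{S_{(k-1)\ell} < S_{\delta n} - (\lambda + \sqrt{\eta'})(\delta n - (k-1)\ell)\}$. Combined with the conditioning event $S_{\delta n} \leq \lambda \delta n$, this forces $S_{(k-1)\ell} < \lambda (k-1)\ell - \sqrt{\eta'}(\delta n - (k-1)\ell)$, a downward deviation of order $\sqrt{\eta'}\,\delta n$ from the ``typical'' value $\lambda (k-1)\ell$, since $\delta n - (k-1)\ell \geq \delta n /2$ for $k \leq \delta n/(2\ell)$. Note that the $k = 1$ case is vacuous because $\Lambda_1 = A(\pi) \leq \lambda$.

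Step two is to fix $s \in (0, \lambda \delta n]$, condition on $S_{\delta n} = s$, and invoke Lemma~\ref{Beta}: the ratio $S_{(k-1)\ell}/s$ has the law of the $(k-1)\ell$-th order statistic of $\delta n - 1$ i.i.d.\ $\mathrm{Uniform}[0,1]$ variables, independently of $s$. Writing the event of interest as $\{S_{(k-1)\ell}/s < u(s)\}$ with $u(s) = 1 - (\lambda + \sqrt{\eta'})(\delta n - (k-1)\ell)/s$, I would observe that $u(s)$ is monotone increasing in $s$, so the worst case over $s \in (0, \lambda \delta n]$ is attained at $s = \lambda \delta n$, giving the uniform bound $u \leq (k-1)\ell/(\delta n) - (\sqrt{\eta'}/\lambda)(1 - (k-1)\ell/(\delta n))$. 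Using the standard identity $\mathbb{P}(U_{(j)} < u) = \mathbb{P}(\mathrm{Bin}(N,u) \geq j)$ with $N = \delta n - 1$, the target event becomes an upper-tail binomial deviation of magnitude $\Omega(\sqrt{\eta'}\,\delta n)$ above the mean. A standard Chernoff/Hoeffding bound then yields $\exp(-c\,\delta n\, \eta'/\lambda^2)$ per term, which is at most $\exp(-\delta n \eta'/4)$ since $\lambda < 1$ whenever $\eta' < 1/4$. A final union bound over $k \in \{2, \ldots, \lfloor \delta n/(2\ell) \rfloor\}$ contributes a polynomial factor at most $n$, and this comfortably yields the claimed $2n\,\mathrm{e}^{-\delta n \eta'/16}$ once $n$ is large enough (given $\delta_0$ and $\eta'$).

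The main technical subtlety is that the conditioning is on the set event $\{A(\pi) \leq \lambda\}$ rather than on a single value of $S_{\delta n}$. The monotonicity of $u(s)$ in $s$ resolves this cleanly: the point-conditioned probability is maximized at the upper boundary $s = \lambda \delta n$, and so the same bound dominates the set-conditioned probability after integrating against the density of $S_{\delta n}$ restricted to $(0, \lambda \delta n]$.
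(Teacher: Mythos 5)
Your proof is correct, and it shares the same key ingredient as the paper's --- Lemma~\ref{Beta} to decouple the ratio $S_{(k-1)\ell}/S_{\delta n}$ from the conditioning on $A(\pi) \leq \lambda$ --- but then proceeds quite differently. The paper keeps the event in the form $S_{\ell_k} \leq \tfrac{S_{\delta n}}{\delta n}(\ell_k - \sqrt{\eta'}(\delta n - \ell_k))$ and splits it into two sub-events, one where $S_{\ell_k}$ falls below $\ell_k - \sqrt{\eta'}\delta n/4$ and one where $S_{\delta n}/\delta n$ exceeds $1 + \sqrt{\eta'}/2$, and then bounds each with the elementary exponential tail estimates of Lemma~\ref{Azuma}. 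You instead push Lemma~\ref{Beta} one step further, interpreting $S_{(k-1)\ell}/S_{\delta n}$ as the $(k-1)\ell$-th uniform order statistic out of $\delta n - 1$ samples, converting the event to a one-sided binomial tail via $\mathbb{P}(U_{(j)} < u) = \mathbb{P}(\mathrm{Bin}(\delta n - 1, u) \geq j)$, and hitting it once with Hoeffding. The monotonicity observation that $u(s)$ increases in $s$ and therefore the worst point-conditioned probability occurs at $s = \lambda\delta n$ does exactly the same conceptual work as the paper's use of $S_{\delta n} \leq \lambda\delta n \leq \delta n$ (both need $\lambda \leq 1$, hence the hypothesis $\eta' < 1/4$). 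Your route eliminates the auxiliary threshold $1 + \sqrt{\eta'}/2$ and the two-term split, and produces a slightly sharper exponent ($\eta'\delta n/(2\lambda^2)$ versus the paper's $\eta'\delta n/16$), though this tightening is immaterial for the theorem. The paper's route is more self-contained given its toolbox (it only needs the sub-exponential bounds it already proved in Lemma~\ref{Azuma}), while yours is cleaner if one is comfortable with the order-statistic/binomial duality.
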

\begin{proof}
For  $1 \leq k \leq \delta n/2\ell$, let $\ell_k = (k - 1)\ell$, $n_s = \lceil 2\ell / \delta_0 
\rceil$ and $E_{k, \eta', n}^\pi = \{\Lambda_k\geq \lambda + \sqrt{\eta'}\}$. Assume $n \geq n_s$ 
so that $\delta n / 2\ell \geq 1$. On $E_{k, \eta', 
n}^\pi$, we have
\begin{eqnarray*}
\frac{S_{\ell_k}}{S_{\delta n}} \leq  \frac{\ell_k S_{\delta n} / \delta n - \sqrt{\eta'}(\delta n 
- \ell_k)}{S_{\delta n}} \leq  \frac{\ell_k}{\delta n} - \frac{\sqrt{\eta'}(\delta n - \ell_k)}
{\delta n}\,,
\end{eqnarray*}
where the last inequality holds since we are conditioning on $S_{\delta n} \leq \lambda \delta n$ 
and $\lambda \leq 1$ when $\eta' < 1/4$ (recall that $\eta < \eta'$). Therefore, we get
\begin{equation}\label{slope_change}
\mathbb{P}(E_{k,\eta',n}^\pi | A(\pi) \leq \lambda) 
\leq \mathbb{P}(S_{\ell_k} \leq \tfrac{S_{\delta n}}{\delta n}\big(\ell_k - \sqrt{\eta'}(\delta n 
- \ell_k)))
\end{equation}
Now we evaluate the right hand side of \eqref{slope_change}. Analogous to \eqref{ratio_prob} in 
the proof of Lemma~\ref{drop_prob}, we can assume without loss of generality  that $X_1, 
X_2, \ldots X_L$ are i.i.d.\ exponential variables with  mean 1.
It is routine to check that $$(1+\sqrt{\eta'}/2)\times \big(\ell_k - \sqrt{\eta'}(\delta n - 
\ell_k)\big) \leq \ell_k - \sqrt{\eta'}\delta n/4 \,, \mbox{ for all } 1\leq k \leq \delta 
n/2\ell\,.$$
Thus, for all $1 \leq k \leq \delta n/2\ell$ we get
\begin{eqnarray*}
\mathbb{P}\Big(S_{\ell_k} \leq \frac{S_{\delta n}}{\delta n}\big(\ell_k - \sqrt{\eta'}(\delta n - 
\ell_k)\big)\Big) 
& \leq & \mathbb{P}\Big(S_{\ell_k} \leq \ell_k - \sqrt{\eta'}\delta n/4 \Big) + 
\mathbb{P}\Big(\frac{S_{\delta n}}{\delta n} \geq 1 + \sqrt{\eta'}/2\Big)\\
& \leq & \mathrm{e}^{-\delta n \eta' / 16} + \mathrm{e}^{-\delta n \eta'/ 16},
\end{eqnarray*}
where the second inequality follows from \eqref{Azuma2} and \eqref{Azuma1} respectively. Combined 
with \eqref{slope_change}, it gives that
\begin{equation*}
\mathbb{P}(E_{k,\eta',n}^\pi|A(\pi) \leq \lambda) \leq 2\mathrm{e}^{- \delta n \eta'/16}\,, \mbox{ 
for all }1 \leq k \leq \delta n/2\ell\,.
\end{equation*}
An application of a union bound over $k$ completes the proof of the lemma.
\end{proof}

\begin{proof}[Proof of Theorem~\ref{Prop}: upper bound]
Assume that $\eta' < 1/4 \wedge \eta_0$ where $\eta_0$ is same as given in the statement of 
Lemma~\ref{drop_prob}. Fix a $\delta_0 = \delta_0(\eta')$ in $(0,1)$ and let $n_0 = n_0(\delta_0, 
\eta') = n_d(\delta_0, \eta') \vee n_s(\delta_0, \eta')$, where $n_d, n_s$ are as stated in 
Lemmas \ref{drop_prob} and \ref{slope_rise} respectively. In the remaining part of this section we 
will assume that $n \geq n_0$ and $\delta \geq \delta_0$, so that Lemmas \ref{drop_prob} and 
\ref{slope_rise} become applicable. Now let $\pi$ be a path with length $\delta n$. From 
Lemma~\ref{slope_rise} we get that with large probability $\Lambda_k \leq \lambda + \sqrt{\eta'}$ 
for all $k$ between 1 and $\delta n /2\ell$. But it takes a routine computation to show that 
$A_{n,k,\eta'}^\pi \setminus \{\Lambda_k \leq \lambda + \sqrt{\eta'}\}\subseteq  D_{n,k,
\eta'}^\pi$ when $\eta'$ is small. Thus $\tilde N_{\eta', n}^\pi \geq N_{\eta', n}^\pi$ except on 
$E_{\eta', n}^\pi$. Consequently Lemma~\ref{drop_prob} allows us to use binomial distribution to 
bound quantities like $\mathbb{P}(\tilde N_{\eta', n}^\pi \leq x)$ with a ``small error term'' 
caused by the rare event $E_{\eta', n}^\pi$. Formally, 

\begin{eqnarray*}
\mathbb{P}\Big(\tilde N_{\eta' , n}^\pi \leq 2\mathbb{E}N_{\eta'} | A(\pi) \leq 
\lambda\Big) 
&\leq &  \mathbb{P}\Big(N_{\eta' , n}^\pi \leq 2\mathbb{E}N_{\eta'} | A(\pi) \leq 
\lambda\Big) + \mathbb{P}\Big(E_{\eta', n}| A(\pi) \leq \lambda\Big)\\
& \leq & \mathbb{P}\Big(N_{\eta' , n}^\pi \leq 2\mathbb{E}N_{\eta'}|A(\pi) \leq \lambda\Big) + 
2n\mathrm{e}^{-\delta n \eta'/16}\,,
\end{eqnarray*}
where the last inequality follows from Lemma~\ref{slope_rise}.
Therefore, by Lemma~\ref{drop_prob}, we get that
\begin{equation}
\mathbb{P}\Big(\tilde N_{\eta', n}^\pi \leq 2\mathbb{E}N_{\eta'} | A(\pi) \leq 
\lambda\Big) \leq  \mathbb{P}\Big(\mathrm{Bin}(\delta n / 2\ell, c) \leq 2\mathbb{E}N_{\eta'}\Big) 
+  2n\mathrm{e}^{-\delta n \eta'/16} \,.\label{second_bd}
\end{equation} 
Next let us define a new event as 
$$\Xi_{\eta,\delta_0,n} = \mbox{$\bigcup_{k \geq \delta_0 n}$ $\bigcup_{\pi \in 
\Pi_k}$}\big\{\tilde N_{\eta', n}^\pi \geq 2\mathbb{E}N_{\eta'}, A(\pi) \leq \lambda \big\}.$$
So $\Xi_{\eta,\delta_0,n}$ is the event that there exists a $\lambda$-light path $\pi$ with 
$len(\pi) \geq \delta_0 n$ and which contains at least $2\mathbb{E}N_{\eta'}$ many 
downcrossings. Thus occurrence of $\Xi_{\eta,\delta_0,n}$ implies that $N_{\eta'} \geq 
2\mathbb{E}N_{\eta'}$ which has small probability owing to \eqref{concentration_2}. On the other 
hand if $\Xi_{\eta,\delta_0,n}$ does not occur, $L(n, \lambda) \geq \delta_0 n$ implies the 
existence of a $\lambda$-light path of length at least $\delta_0 n$ that has no more than 
$2\mathbb{E}N_{\eta'}$ many downcrossings. Formally,

\begin{align}
\mathbb{P}&\big(L(\lambda, n) \geq \delta_0 n \big) =  \mathbb{P}\big(\Xi_{\eta,\delta_0,n}\big) + 
\mathbb{P}\big(\{L(\lambda, n) \geq \delta_0 n\} 
\setminus \Xi_{\eta,\delta_0,n}\big)\nonumber \\
& \leq  \mathbb{P}\big(N_{\eta'} \geq 
                                                    2\mathbb{E}N_{\eta'}\big) +  
                                    \mathbb{P}\Big(\mbox{$\bigcup_{k \geq \delta_0 
                                                    n}$ $\bigcup_{\pi \in \Pi_k}$}\big\{\tilde 
                                                    N_{\eta', n}^\pi \leq 2\mathbb{E}N_{\eta'}, 
                                                    A(\pi) \leq \lambda \big\}\Big) \nonumber\\
                                                    & \leq  o(1) +  
                                    \mbox{$\sum_{k \geq \delta_0 n}$ $\sum_{\pi \in 
                                    \Pi_k}$}\mathbb{P}\Big(\tilde N_{\eta', n}^\pi \leq 
                                    2\mathbb{E}N_{\eta'} | A(\pi) \leq
                                    \lambda\Big)\mathbb{P}\big(A(\pi) \leq \lambda\big)\,.
                                    \label{break-up}
\end{align}
Now choose $\delta_0 = \delta_0(\eta')$ such that 
\begin{equation}
\label{Ineq3}
\delta_0 n \eta' c /4 = 2\mathbb{E}N_{\eta'}\,.
\end{equation}
Since $1 / \ell \geq \eta'$, we then get from Binomial concentration that for $\delta \geq 
\delta_0$,
$$\mathbb{P}\Big(\mathrm{Bin}(\delta n / 2\ell, c) \leq 2\mathbb{E}N_{\eta'}\Big) \leq 
\mathrm{e}^{-\delta n\eta'c^2/ 16}\,.$$
Plugging this into \eqref{second_bd} we have
\begin{equation*}
\mathbb{P}\Big(\tilde N_{\eta', n}^\pi \leq 2\mathbb{E}N_{\eta'} | A(\pi) \leq 
\lambda\Big) \leq 2n\mathrm{e}^{- len(\pi)\eta'/16} + \mathrm{e}^{- len(\pi)\eta'c^2/ 16}\,,
\end{equation*}
whenever $len(\pi) \geq \delta_0 n$. A straightforward computation using 
\eqref{gamma_density} yields
$$\mbox{$\sum_{\pi \in \Pi_k}$}\mathbb{P}\big(A(\pi) \leq 
\lambda\big) \leq  \frac{n}{\sqrt{2\pi k}}\mathrm{e}^{\mathrm{e} k \eta}\,.$$

The last two displays and \eqref{break-up} together imply that
\begin{equation}
\label{break_up2}
\mathbb{P}\big(L(\lambda, n) \geq \delta_0 n \big) \leq o(1) + \mbox{$\sum_{k \geq \delta_0 n}$}
(2n\mathrm{e}^{- k\eta'/16} + \mathrm{e}^{- k\eta'c^2/ 16})\mathrm{e}^{\mathrm{e} k 
\eta}\frac{n}{\sqrt{2\pi k}}\,.
\end{equation}
Setting $\eta' = 32\mathrm{e}\eta / c^2$ we get from \eqref{break_up2},
$$\mathbb{P}\big(L(n, \lambda)\geq \delta_0  n\big) = o(1)\,.$$
It remains to be checked whether $\delta_0$ obtained from \eqref{Ineq3} has the correct functional 
form as in \eqref{main_prop}. To this end recall from \eqref{first_bnd} that
$$2\mathbb{E}N_{\eta'} \leq 3\alpha \mathrm{e}^{\mathrm{e}\eta/\eta'} \sqrt{\eta'} 
\mathrm{e}^{-\mathrm{e}/\sqrt{\eta'}}n\,,$$
where $\eta$ is small enough so that $C_0(\eta)$ in \eqref{first_bnd} is less than $3/2$. Hence 
$\delta_0 \leq \mathrm{e}^{-C_2 / \sqrt{\eta}}$ for some absolute constant $C_2$ when $\eta$ is 
small.

\end{proof}

\section{Proof of the lower bound}
\label{sec-lower}

\subsection{Existence of a large number of vertex-disjoint light paths}
As we mentioned in the introduction, the proof of lower bound is divided into two steps. In the 
first step we split the vertices into two parts and show that there exist a large number of 
short (i.e. of $O(1)$ length) vertex-disjoint $\lambda$-light paths containing vertices from only 
one part. In the second step we use vertices in the other part as ``links'' to connect a 
subcollection of the short paths obtained from step 1 into a long (i.e. of $\Theta(n)$ length) and 
light path. The current and next subsections are devoted to these two steps in respective order.
\par

In light of the preceding discussion, let us first select a complete subgraph $\mathcal W^*_n$ of 
$\mathcal W_n$ containing $n_* = n_{*; \eta, \zeta_1} = (1 - \zeta_1 \eta)n$ vertices where $\eta, 
\zeta_1 \in (0, 1)$. To be specific we can order the vertices of $\mathcal W_n$ in some arbitrary 
way and define $\mathcal W^*_n$ as the subgraph induced by ``first'' $n_*$ vertices. It will be 
shown that there are substantially many short and light paths that can be formed with the vertices 
in $V(\mathcal W_n^*)$. We will in fact require slightly more from a path than just being 
$\lambda$-light. For $\pi \in \Pi_\ell$ and some $\zeta_2 > 0$, define

\begin{equation}
G_{\pi} = G_{\pi; \eta, \zeta_2} = \Big \{\lambda \ell - 1 \leq W(\pi) \leq \lambda \ell, 
M(\pi) \leq (\zeta_2/\sqrt{\eta}).(W(\pi)/\lambda \ell)\Big \}\,,
\label{good_event}
\end{equation}

where $M(\pi)$ is the maximum deviation of $\pi$ away from the linear interpolation between the 
starting and ending edges, formally given by
\begin{equation}\label{eq-def-M}
M(\pi) = \sup_{1\leq k \leq \ell}\mid\mbox{$\sum_{i = 1}^{k}$}W_{e_i} - \tfrac{k}
{\ell}W(\pi)\mid\,.
\end{equation}
A similar class of events were considered in \cite{Ding13, MW13} in order for second moment 
computation. As the authors mentioned in these papers, the factor $W(\pi)/\lambda \ell$ provides 
some technical ease in view of the following property which is a consequence of Lemma~\ref{Beta}:
\begin{equation}
\mathbb{P}(M(\pi) \leq (\zeta_2/\sqrt{\eta}).(W(\pi)/\lambda \ell)\mbox{ }|\mbox{ }W(\pi) = w) 
\equiv \mbox{constant for all }w > 0.\label{cond_Property}
\end{equation}
Call a path $\pi\in \Pi_\ell$ \emph{good} if $G_{\pi}$ occurs. Since we are only interested in 
good paths whose vertices come from $V(\mathcal W^*_n)$, we need some related notations. For $\ell 
\in \mathbb N$, denote by $\Pi_\ell^*  = \Pi_{\ell; \eta, \zeta_1}^*$ the set of all paths of 
length $\ell$ in $\mathcal{W}_n^*$ and by $N^*_\ell = N^*_{\ell; \eta, \zeta_1, \zeta_2}$ the 
total number of good paths in $\Pi_\ell^*$, i.e., $N^*_\ell = \sum_{\pi \in 
\Pi_\ell^*}\mathbf{1}_{G_{\pi}}$. In order to carry out second 
moment analysis of $N^*_\ell$ we need to control the correlation between $\mathbf{1}_{G_{\pi}}$ 
and $\mathbf{1}_{G_{\pi'}}$ where $\pi$, $\pi' \in \Pi_\ell^*$. It is plausible that such 
correlation depends on the number of common edges between $\pi$ and $\pi'$ and in fact bounding 
the correlation in terms of the number of common edges was sufficient for proving 
\eqref{concentration_1} in Section \ref{sec-upper}. But in this case we need an additional 
measurement instead of just $|E(\pi) \cap E(\pi')|$. This is discussed in detail in \cite{Ding13, 
MW13} and some of their results will be used. Let $\pi$ be a path in $\Pi_\ell ^*$ and $S 
\subseteq E(\pi)$. A segment of $\pi$ is called an $S$-component or a component of $S$ if it is a 
maximal segment of $\pi$ whose all edges belong to $S$. Notice that $S$-components can be defined 
solely in terms of $S$. For two paths $\pi$ and $\pi'$, define a functional $\theta(\pi, \pi')$ to 
be the number of $S$-components where $S = E(\pi) \cap E(\pi')$. As $\pi$ and $\pi'$ are self-avoiding, $\theta(\pi, \pi')$ is basically the number of maximal segments shared between $\pi$ and 
$\pi'$. We refer the readers to Figure \ref{fig:fig0} for an illustration.

\begin{figure}[!ht]
 \centering\includegraphics[width=0.8\textwidth, keepaspectratio]{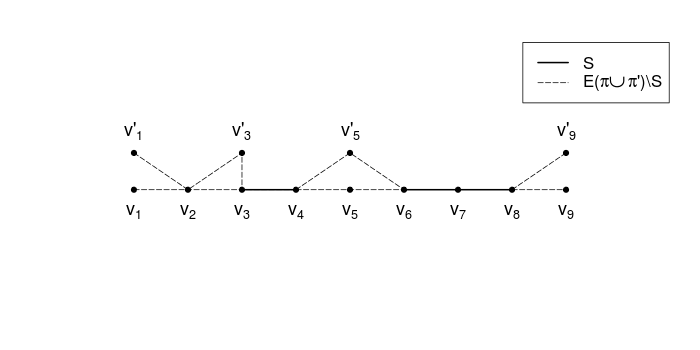}
 \caption{{\bf Components of the set of edges common to two paths.} In this figure the sequences  
of vertices $v_1, v_2, v_3, v_4, v_5, v_6, v_7, v_8, v_9$ and $v_1', v_2, v_3', v_3, v_4, v_5', 
v_6,  v_7, v_8, v_9'$ define the paths $\pi$ and $\pi'$ respectively. The dark edges belong to $S 
= E(\pi) \cap E(\pi')$. Here $\theta(\pi, \pi') = 2$ with the segments $(v_3, v_4)$ and 
$(v_6, v_7, v_8)$ being the two $S$-components.}
\label{fig:fig0}
\end{figure}

The following result (\cite[Lemma 2.9]{Ding13}) relates cardinality of $V(S)$, the union of all 
endpoints of edges in $S = E(\pi) \cap E(\pi')$, to $\theta(\pi, \pi')$ and $|S|$.

$$ |V(S)| = |S| + \theta(\pi, \pi')\,.$$

The pair $\big(\theta(\pi, \pi'), |E(\pi) \cap E(\pi')|\big)$ turns out to be sufficient for 
bounding the correlation between $\mathbf{1}_{G_{\pi}}$ and $\mathbf{1}_{G_{\pi'}}$ from above. 
Consequently it makes sense to partition $\Pi_\ell^*$ based on the value of this pair. More 
formally for $\pi \in \Pi_\ell^*$ and integers $i \leq j$, define the set $A_{i,j}$ as
\begin{equation} \label{eq-def-A-i-j}
A_{i,j} \equiv A_{i,j}(\pi) = \{\pi' \in \Pi_\ell^*: \theta(\pi,\pi') = i, |E(\pi)\cap E(\pi')| = 
j\}.
\end{equation}
We need a number of lemmas from \cite{Ding13}. 
\begin{lemma}
\label{up_bnd_lemma}(\cite[Lemma 2.10]{Ding13})
For any $1 \leq \ell \leq n_*$ and any $\pi \in \Pi_\ell^*$, we have that 
for any positive integers $i \leq j$
\begin{eqnarray*}
\label{up_bnd}|A_{i,j}(\pi)| \leq  \tbinom{\ell + 1}{2i} \tbinom{n_* - i - j}{\ell + 1 - i - 
j}2^i(\ell + 1 -j)!\leq  \ell^{3i}n_*^{\ell + 1 - i - j}.
\end{eqnarray*}
\end{lemma}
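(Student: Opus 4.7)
The plan is to enumerate $\pi' \in A_{i,j}(\pi)$ in two stages: first choose the common edge set $S = E(\pi) \cap E(\pi')$ viewed as a subgraph of $\pi$, then reconstruct $\pi'$ around these shared pieces. For the first stage, since both paths are self-avoiding, $\pi \cap \pi'$ is acyclic and therefore a disjoint union of exactly $i$ subpaths of $\pi$, namely the $S$-components. Each such shared segment is uniquely determined by its two endpoints among the $\ell + 1$ vertices of $\pi$, so picking all $i$ segments amounts to selecting $2i$ endpoint-vertices on $\pi$; this is bounded crudely by $\binom{\ell + 1}{2i}$. The bound is an overcount (it ignores adjacency constraints and the requirement that the total length be exactly $j$), but only an upper bound is needed.

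In the second stage, I fix the segments $\sigma_1, \ldots, \sigma_i$ and view $\pi'$ as a linear sequence of ``blocks'': each $\sigma_k$ (with a chosen orientation) is one block and each non-shared vertex of $\pi'$, drawn from $V(\mathcal W_n^*) \setminus V(S)$, is one block. Using the identity $|V(S)| = |S| + \theta(\pi,\pi') = i + j$ recalled just before the lemma, there are $\ell + 1 - i - j$ non-shared vertex blocks and hence $\ell + 1 - j$ blocks in total. I then specify $\pi'$ by choosing which $i$ of the $\ell + 1 - j$ positions hold a shared segment ($\binom{\ell + 1 - j}{i}$ choices), assigning the $i$ labelled segments to those positions and orienting each ($i! \, 2^i$ choices), and selecting and linearly arranging $\ell + 1 - i - j$ non-shared vertices from the $n_* - i - j$ available ones ($\binom{n_* - i - j}{\ell + 1 - i - j}(\ell + 1 - i - j)!$ choices). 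The product collapses via $\binom{\ell + 1 - j}{i} \, i! \, (\ell + 1 - i - j)! = (\ell + 1 - j)!$, producing exactly the first stated inequality.

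The second inequality then follows from elementary estimates: $\binom{\ell + 1}{2i} \leq \ell^{2i}$ (for $\ell$ not too small); the ratio $(\ell + 1 - j)!/(\ell + 1 - i - j)!$ is a product of $i$ integers each at most $\ell + 1$ and hence bounded by $\ell^i$; and $\binom{n_* - i - j}{\ell + 1 - i - j}(\ell + 1 - i - j)! \leq n_*^{\ell + 1 - i - j}$, with the $2^i$ absorbed into the $\ell^{3i}$ factor. The only delicate point is making sure the block description of stage two actually realizes every $\pi' \in A_{i,j}(\pi)$ at least once; this is immediate, since once $S$ is fixed any $\pi' \in A_{i,j}(\pi)$ admits a unique block decomposition, so no path is missed. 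I do not anticipate a genuine obstacle beyond bookkeeping the factorial identities correctly.
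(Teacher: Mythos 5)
The paper does not prove this lemma; it is quoted verbatim from \cite[Lemma~2.10]{Ding13}, so there is no in-paper argument to compare against. Your proof is a correct reconstruction of the counting argument that the stated bound encodes. The two-stage enumeration is exactly right: choosing the $2i$ endpoints of the $S$-components on $\pi$ gives the $\binom{\ell+1}{2i}$ factor; decomposing $\pi'$ into $\ell+1-j$ blocks (using $|V(S)| = i+j$) and then choosing positions, orientations, and the $\ell+1-i-j$ fresh vertices gives $\binom{\ell+1-j}{i}\,i!\,2^i\,\binom{n_*-i-j}{\ell+1-i-j}(\ell+1-i-j)!$, which collapses as you say to $2^i(\ell+1-j)!\binom{n_*-i-j}{\ell+1-i-j}$. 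Two details you implicitly rely on, both fine but worth noting: each $S$-component has at least one edge (so exactly two endpoints, and the components are pairwise vertex-disjoint by maximality), and each $S$-component appears as a contiguous block in $\pi'$ because a connected subgraph of a path is a subpath. You also correctly observe the enumeration overcounts and only produces an upper bound, which is all that is claimed. For the crude second inequality, the chain $\binom{n_*-i-j}{\ell+1-i-j}(\ell+1-i-j)! \leq n_*^{\ell+1-i-j}$, $(\ell+1-j)!/(\ell+1-i-j)! \leq \ell^i$, and $\binom{\ell+1}{2i}2^i \leq \ell^{2i}$ is what is intended; as you flag, the last step needs $\ell$ to be large enough relative to $i$ (for $i=1$, $\binom{\ell+1}{2}\cdot 2 = (\ell+1)\ell$ slightly exceeds $\ell^2$), but this is a constant-factor quibble inherited from the cited statement and irrelevant to how the lemma is used in the paper, where $\ell$ is always taken large.
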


\begin{lemma}( \cite[Lemma 2.3]{Ding13})
\label{cond_dev1}
Let $Z_i$ be i.i.d.\ exponential variables with mean $\theta > 0$ for $1 \leq i \leq \ell$. For 
$1/4 \leq \rho \leq 4$, consider the variable
\begin{equation}
\label{max_dev_new}
M_\ell = \sup_{1 \leq k \leq \ell}\mid\mbox{$\sum_{i=1}^k$} Z_i - \rho k \mid.
\end{equation}
Then there exist absolute constants $c^*$, $C^* > 0$ such that for all $r \geq 1$ and $\ell \geq 
r^2$,
\begin{equation*}
\mathrm{e}^{-C^*\ell/r^2} \leq \mathbb{P}(M_\ell \leq r \mid \mbox{$\sum_{i=1}^\ell$} Z_i = \rho 
\ell ) \leq \mathrm{e}^{-c^*\ell/r^2}\,.
\end{equation*}
\end{lemma}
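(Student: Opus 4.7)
The key tool is Lemma~\ref{Beta}: after rescaling so that $\theta = 1$, conditional on $S_\ell = \rho\ell$ the sequence $(S_k/(\rho\ell))_{k=1}^\ell$ has the same distribution as the partial sums of a $\mathrm{Dirichlet}(\mathbf{1}_\ell)$ vector, and in particular its law does not depend on $\rho$. Thus the event $\{M_\ell \le r\}$ reduces to a Dirichlet bridge staying within a tube of relative width $r/(\rho\ell)$. Since this bridge has natural fluctuation scale $1/\sqrt{\ell}$ and $\rho$ is bounded both above and below, one expects the tube probability to be of order $\exp(-\Theta(\ell/r^2))$, matching the claim.

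For both directions the plan is to use a block decomposition. Partition the first half of the index set into $m \asymp \ell/r^2$ blocks of length $r^2$ with endpoints $t_j = j r^2$, and set $I_j := S_{t_j} - S_{t_{j-1}}$. Iterating Lemma~\ref{Beta} shows that, conditionally on $S_\ell = \rho\ell$ and on $(I_1, \ldots, I_{j-1})$, the law of $I_j$ equals $(\rho\ell - S_{t_{j-1}}) \cdot B$ with $B \sim \mathrm{Beta}(r^2, \ell - t_j)$. Restricting to the first half guarantees $\ell - t_j \ge \ell/2 \ge r^2$, so a Stirling-based local density estimate yields that the conditional density of $I_j$ lies in $[c, C]/(\rho r)$ on an interval of length $\Theta(\rho r)$ centered at its mean $\approx \rho r^2$.

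The upper bound is then immediate: the event $\{M_\ell \le r\}$ forces $|I_j - \rho r^2| \le 2r$ for every $j$, which by the density upper bound has conditional probability at most $1 - c$. Multiplying across the $m$ blocks gives $\mathbb{P}(M_\ell \le r \mid S_\ell = \rho\ell) \le (1-c)^m \le \exp(-c^* \ell/r^2)$. For the lower bound I would construct a nested event $\mathcal{E} \subseteq \{M_\ell \le r\}$ by requiring (i) $|S_{t_j} - \rho t_j| \le r/3$ at every block endpoint, which by the density lower bound has conditional probability at least $c' > 0$ per step, and (ii) within each block the deviation from linear interpolation is at most $r/3$. Given the block endpoints, the within-block pieces are independent exponential mini-bridges of $r^2$ steps with slopes close to $\rho$; each has maximum fluctuation typically of order $r$ and stays within $r/3$ with probability bounded below by an absolute constant (either by a recursive application at the smaller scale or by a quantitative Brownian bridge small-ball estimate). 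Multiplying the two contributions yields $\mathbb{P}(\mathcal{E}) \ge \exp(-C^* \ell/r^2)$.

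The main technical hurdle is the uniform local density estimate on $\mathrm{Beta}(r^2, L)$ as $L$ varies over $[r^2, \ell]$, which must supply both the upper bound used for anti-concentration (upper bound proof) and the lower bound used for the small-ball estimate (lower bound proof). This demands Stirling expansions that are robust in both regimes $L \asymp r^2$ and $L \gg r^2$, with the dependence on $\rho \in [1/4, 4]$ carefully tracked so that the resulting constants $c, c', C$ are absolute. A secondary obstacle is the within-block small-ball bound in part (ii), which is cleanest either via a quantitative Brownian-bridge small-ball bound at scale $r^2$ or via a self-contained recursive argument mirroring the main decomposition.
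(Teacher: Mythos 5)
This lemma is cited verbatim from Ding (2013, Lemma 2.3) and the present paper does not reproduce its proof, so there is no in-paper argument to match against; what follows is an assessment of the blind attempt on its own terms. The reduction via Lemma~\ref{Beta} to a $\rho$-independent Dirichlet bridge, and the decomposition into $\Theta(\ell/r^2)$ blocks of length $r^2$ on which the conditional increment is a scaled $\mathrm{Beta}(r^2, \ell - t_j)$, is a sensible skeleton and is in the spirit of such small-ball estimates. For the upper bound your argument basically works, but the phrasing is off: a density \emph{upper} bound on $I_j$ does not by itself yield anti-concentration. What you actually need is a \emph{variance lower bound} $\mathrm{Var}(I_j \mid S_{t_{j-1}}) \geq c\rho^2 r^2$ uniformly over $j \leq \ell/(2r^2)$ and over $S_{t_{j-1}}$ in the tube, which then guarantees (by e.g. a Paley--Zygmund type argument or the explicit Beta tail) that any interval of width $4r$ carries probability at most $1 - c$ with $c>0$ absolute. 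That constant $c$ is tiny when $\rho$ is near $1/4$, but it is still absolute, so the final bound $e^{-c^*\ell/r^2}$ is fine.

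The lower bound has two genuine gaps. First, the event $\mathcal{E}$ you define constrains only the first half of the index set, yet $M_\ell$ is a supremum over all $1 \leq k \leq \ell$; as written, $\mathcal{E} \not\subseteq \{M_\ell \leq r\}$. You need to control the second half as well, e.g.\ by time-reversing the bridge and running the same blocking from the right end, then handling the dependence between the two halves (conditioning on $S_{\lfloor \ell/2\rfloor}$ makes the two halves conditionally independent, which is the cleanest route). Second, the within-block step (ii) --- that a block of $r^2$ conditioned-exponential increments stays within $r/3$ of its linear interpolation with probability bounded below by an absolute constant --- is exactly a constant-scale instance of the lemma you are trying to prove, so invoking it ``recursively'' is circular unless you establish a genuine base case. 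A quantitative Brownian-bridge small-ball comparison would do, but it needs an actual argument (a strong coupling or a Donsker-plus-uniform-integrability argument at scale $r^2$, with the slope $\rho'$ verified to remain in a bounded interval). As things stand, (i) and (ii) are both plausible but neither is proved, and (ii) in particular is where the real work lies.
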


\begin{lemma}\cite[Lemma 3.2]{Ding13}
\label{cond_dev}
Let $Z_i$ be i.i.d.\ exponential variables with mean $\theta > 0$ for $i \in \mathbb{N}$. Consider 
$1 \leq r \leq \sqrt{\ell}$ and the integer intervals $[a_1, b_1], [a_2, b_2], \cdots, [a_m, b_m]$ 
such that $1 \leq a_1 \leq b_1 \leq a_2 \leq \cdots \leq a_m \leq b_m \leq \ell$ and $q = 
\sum_{i=1}^m(b_i - a_i + 1) \leq \ell - 1$. Let $1/4 \leq \rho \leq 1$ and $M_\ell$ be defined as 
in the previous lemma. Also write $A = \cup_{i = 1}^m [a_i,b_i] \cap \mathbb{N}$ and $p_\ell 
= \mathbb{P}(M_\ell \leq r | \sum_{i=1}^\ell Z_i = \rho \ell)$. Then for all $z_j$ such that
$$\mbox{$\sum_{j = a_i}^{b_i}$} z_j - \rho(b_i - a_i + 1) \leq 2r\,,$$
we have 
\begin{equation}
\label{cond_dev_ineq}
\mathbb{P}(M_\ell \leq r | \mbox{$\sum_{i = 1}^{\ell}$}Z_i = \rho \ell, Z_j = z_j \mbox{ for all 
}j \in A) \leq C_3r\sqrt{q \wedge (\ell - q)} p_\ell 10^{100mr}\mathrm{e}^{C^*q/r^2}\,,
\end{equation}
where $C^*$ is the constant from Lemma~\ref{cond_dev1} and $C_3 > 0$ is an absolute constant.
\end{lemma}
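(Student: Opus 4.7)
The plan is to reduce the problem to Lemma~\ref{cond_dev1} via a density-ratio argument. Writing $B = [1,\ell] \setminus A$ and $s_A = \sum_{j \in A} z_j$, Bayes' rule gives
\begin{equation*}
\mathbb{P}\bigl(M_\ell \leq r \,\big|\, \mbox{$\sum_{i=1}^{\ell} Z_i$} = \rho\ell,\, Z_j = z_j \text{ for all } j\in A\bigr) = p_\ell \cdot \frac{f_1(z_A)}{f_0(z_A)},
\end{equation*}
where $f_0, f_1$ denote the conditional densities of $Z_A = (Z_j)_{j \in A}$ given only $\sum_i Z_i = \rho\ell$ and given also $\{M_\ell \leq r\}$, respectively. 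The task is therefore to upper bound $f_1/f_0$ by $C_3 \, r\sqrt{q\wedge(\ell - q)}\, 10^{100 mr}\,\mathrm{e}^{C^* q/r^2}$. The denominator is a standard Dirichlet marginal coming from Lemma~\ref{Beta}: integrating out the $B$-coordinates yields $f_0(z_A) \propto (\rho\ell - s_A)^{\ell - q - 1}$ on the appropriate simplex, a density whose local-CLT behavior near $s_A \approx \rho q$ will ultimately produce the polynomial prefactor $r\sqrt{q \wedge (\ell - q)}$ once it is compared with the $O(1/r)$-scale effective support of $s_A$ under $\{M_\ell \leq r\}$.

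To handle the numerator I would decompose the trajectory $(S_k)_{k \leq \ell}$ into the $m$ fixed blocks $[a_i,b_i]$ interleaved with the $m+1$ free segments $I_0 = [1, a_1 - 1]$, $I_i = [b_i + 1, a_{i+1} - 1]$ for $1 \leq i \leq m - 1$, and $I_m = [b_m + 1, \ell]$, of total free length $\ell - q$. Under $\{M_\ell \leq r\}$ each of the $2m$ boundary partial sums $S_{a_i - 1}$ and $S_{b_i}$ is pinned within $r$ of the line $\rho \cdot (\text{index})$; discretizing these boundary values at unit scale and summing over the resulting $O(r)^{2m}$ configurations accounts for the combinatorial factor $10^{100 mr}$. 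Given a fixed discretized configuration, Lemma~\ref{Beta} identifies the conditional law of $(Z_j)_{j \in B}$ as independent (conditionally exponential) sequences on the $I_j$'s, each conditioned on its own sum, which---by the hypothesis $\sum_{j=a_i}^{b_i} z_j - \rho(b_i - a_i + 1) \leq 2r$ combined with the discretized boundary values---lies within $O(r)$ of $\rho|I_j|$. Applying Lemma~\ref{cond_dev1} to each free segment bounds its contribution to $\{M_\ell \leq r\}$ by $\mathrm{e}^{-c^* |I_j|/r^2}$; multiplying over $j$ yields $\mathrm{e}^{-c^*(\ell - q)/r^2}$, and comparing with the lower bound $p_\ell \geq \mathrm{e}^{-C^* \ell/r^2}$ from the same lemma produces the factor $\mathrm{e}^{C^* q/r^2}$ in the statement.

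The main obstacle will be the careful bookkeeping at the $2m$ boundary points: one has to pick the right discretization scale and compose the per-segment bounds from Lemma~\ref{cond_dev1} in such a way that the resulting factors telescope into the product form claimed, and so that the $10^{100 mr}$ constant safely absorbs all stray polynomial-in-$r$ correction terms together with the one-sided asymmetry of the hypothesis on $\sum_{j=a_i}^{b_i} z_j - \rho(b_i - a_i + 1)$. Additional care will be needed in the extreme regimes where either $q$ is close to $\ell$ (few free indices) or many of the $I_j$'s are short, to guarantee that Lemma~\ref{cond_dev1} remains applicable segment-by-segment and that the final product still takes the advertised form.
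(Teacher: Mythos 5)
The paper does not actually prove this lemma; it is cited directly from \cite[Lemma 3.2]{Ding13}, and the paper's only original content is the remark that follows it, which relaxes the original hypothesis $q\leq\ell-10r$ to $q\leq\ell-1$ by observing that when $\ell-10r\leq q\leq\ell-1$ the right-hand side of \eqref{cond_dev_ineq} already exceeds $1$ once $C_3$ is enlarged. Your proposal is instead a from-scratch reproof, which is a legitimate but different route; unfortunately it has a gap in the step that is supposed to produce the factor $\mathrm{e}^{C^*q/r^2}$.

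The gap is a constant mismatch in Lemma~\ref{cond_dev1}. You bound the conditional probability by $(\mathrm{disc.})\cdot\prod_j\mathrm{e}^{-c^*|I_j|/r^2}=(\mathrm{disc.})\,\mathrm{e}^{-c^*(\ell-q)/r^2}$ using the \emph{upper} bound on each free segment, and then ``compare'' with the \emph{lower} bound $p_\ell\geq\mathrm{e}^{-C^*\ell/r^2}$. But $c^*<C^*$ are distinct absolute constants, so the resulting ratio satisfies only
\begin{equation*}
\frac{(\mathrm{disc.})\,\mathrm{e}^{-c^*(\ell-q)/r^2}}{p_\ell}\;\leq\;(\mathrm{disc.})\,\mathrm{e}^{(C^*-c^*)(\ell-q)/r^2}\,\mathrm{e}^{C^*q/r^2}\,,
\end{equation*}
and the surplus $\mathrm{e}^{(C^*-c^*)(\ell-q)/r^2}$ grows exponentially in $(\ell-q)/r^2$, which cannot be absorbed by $C_3\,r\sqrt{q\wedge(\ell-q)}$ or $10^{100mr}$ (these are at worst exponential in $m r$ or $m\log r$, not in $(\ell-q)/r^2$). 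Since $p_\ell$ itself appears in the target and may sit anywhere in the bracket $[\mathrm{e}^{-C^*\ell/r^2},\mathrm{e}^{-c^*\ell/r^2}]$, you cannot use one Lemma~\ref{cond_dev1} constant for the numerator and the other for the denominator; the proof must express both $\mathbb{P}(M_\ell\leq r\mid F,\,Z_A=z_A)$ and $p_\ell$ via the \emph{same} free-segment decomposition so that those contributions cancel between the two, leaving only the ``cost'' of pinning the $m$ blocks (total length $q$) to within $O(r)$ of the interpolation line --- and it is there, via the lower bound of Lemma~\ref{cond_dev1} applied to the \emph{blocks}, that the factor $\mathrm{e}^{C^*q/r^2}$ should emerge. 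You also flag but do not settle the regime of short free segments $|I_j|<r^2$ (where Lemma~\ref{cond_dev1} is inapplicable) and of $q$ near $\ell$; the paper's remark shows the correct fix for $q\geq\ell-10r$ (the bound is vacuous after enlarging $C_3$), and an analogous absorption step would be needed for short $I_j$'s before a per-segment application of Lemma~\ref{cond_dev1} can go through.
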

\begin{remark}
(1) Notice that the bounds in Lemma~\ref{cond_dev1} and \ref{cond_dev} do not depend on the 
particular mean of $Z_i$'s due to Lemma~\ref{Beta}. (2) Although the bounds on $p_\ell$ in 
Lemma~\ref{cond_dev1} do not contain any $\rho$ (as it was restricted to a bounded interval), 
$p_\ell$ actually depends on $r$ only through the ratio $r/\rho$. This follows from an application 
of Lemma~\ref{Beta} with little manipulation. (3) Lemma~\ref{cond_dev} is same as Lemma 3.2. in 
\cite{Ding13} except that in the latter $q$ is restricted to be at most $\ell - 10r$. But we can 
easily extend this to all $q \leq \ell - 1$. To see this assume $\ell - 1 \geq q \geq \ell - 10r$. 
Then the right hand side in \eqref{cond_dev_ineq} becomes at least $C_3 p_\ell 
\mathrm{e}^{C^*\ell/r^2}\mathrm{e}^{-10C^*/r}$. Now from Lemma~\ref{cond_dev1} we get $p_\ell 
\mathrm{e}^{C^*\ell/r^2} \geq 1$. So the right hand side in \eqref{cond_dev_ineq} is bigger than 
$C_3\mathrm{e}^{-10C^*}$ whenever $\ell - 1 \geq q \geq \ell - 10r$. Increasing $C_3$ if necessary 
we can make this number bigger than 1 and thus Lemma~\ref{cond_dev} follows.
\end{remark}
By second moment computation, we can hope to show that $N^*_\ell \sim \E N^*_\ell$ with high 
probability. Then the main challenge is to prove that a large fraction of the good paths are 
mutually vertex-disjoint with high probability. To this end, we consider a graph $\mathcal{G}_n$ 
where each vertex corresponds to a good path in $\mathcal W_n^*$ and an edge is present whenever 
the corresponding paths intersect at one vertex at least. Thus the presence of a large number 
of vertex disjoint good paths in $\mathcal W^*_n$ is equivalent to the existence of a large 
independent subset (i.e., a subset that has no edge among them) in the graph $\mathcal G_n$. The 
following simple lemma is sometimes referred to as Tur\'{a}n's theorem, and can be proved simply 
by employing a greedy algorithm (see, e.g., \cite{Erdos70}).
\begin{lemma}
\label{relation}
Let $G = (V,E)$ be a finite, simple graph with $V \neq \emptyset$. Then $G$ contains an 
independent subset of size at least $|V|^2 / (2|E| + |V|)$. Notice $2|E|$ is the total degree of 
vertices in $G$.
\end{lemma}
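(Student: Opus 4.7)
The plan is to establish the bound via the Caro--Wei inequality, realized through a random-order greedy construction. I would draw a uniformly random permutation $\sigma$ of $V$ and let $I_\sigma \subseteq V$ consist of those vertices $v$ that appear before every neighbor of $v$ in $\sigma$. This set is automatically independent: for any edge $\{u,v\}$, only the earlier endpoint under $\sigma$ can possibly lie in $I_\sigma$, so both endpoints of an edge cannot coexist in it. This can be viewed as the natural greedy algorithm that scans vertices in the order dictated by $\sigma$ and admits each one whose neighbors have not already been admitted.

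Next I would compute $\mathbb{E}|I_\sigma|$. For each $v \in V$, the probability that $v$ is the earliest vertex under $\sigma$ among the $d(v)+1$ vertices in $\{v\} \cup N(v)$ equals $1/(d(v)+1)$ by symmetry of the uniform permutation. Summing over $v$ gives
$$\mathbb{E}|I_\sigma| \;=\; \sum_{v \in V} \frac{1}{d(v)+1}.$$
In particular, there must exist a realization of $\sigma$ for which $|I_\sigma|$ is at least this expectation, producing an independent set of that size.

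Finally, I would convert the sum of reciprocals into the claimed global bound via the Cauchy--Schwarz inequality (equivalently, convexity of $x \mapsto 1/x$):
$$\sum_{v \in V} \frac{1}{d(v)+1} \;\geq\; \frac{|V|^2}{\sum_{v \in V}(d(v)+1)} \;=\; \frac{|V|^2}{2|E|+|V|},$$
where the last equality uses the handshake identity $\sum_{v\in V} d(v) = 2|E|$. Chaining the two displays finishes the proof. No substantial obstacle is anticipated here; the only step that warrants any care is the symmetry computation giving the $1/(d(v)+1)$ marginal, and everything else is a standard two-line probabilistic-method exercise.
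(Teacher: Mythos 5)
Your proof is correct, and it is essentially the same approach the paper has in mind (the paper only cites a greedy argument from Erd\H{o}s without spelling it out): the random-permutation construction is precisely a probabilistic recasting of the deterministic minimum-degree greedy, and both lead to the Caro--Wei bound $\sum_v 1/(d(v)+1)$ before the Cauchy--Schwarz (or AM--HM) step yields $|V|^2/(2|E|+|V|)$.
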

In light of Lemma~\ref{relation}, we wish to show that with high probability the total degree of 
vertices in $\mathcal{G}_n$ is not big relative to $|V(\mathcal{G}_n)|$.
For this purpose, it is desirable to show that the typical number of good paths that intersect 
with a fixed good path $\pi \in \Pi_\ell^*$ is not big. Thus, we need to estimate $\sum_{\pi' \in 
\Pi_{\ell,\pi}^*}\mathbb{P}(G_{\pi'}|G_{\pi})$ where $\Pi_{\ell,\pi}^*$ is the collection of all 
paths $\pi'$ in $\mathcal W_n^*$ sharing at least one vertex with $\pi$. Drawing upon the 
discussions preceding \eqref{eq-def-A-i-j}, we will first estimate $\mathbb{P}(G_{\pi'}|G_{\pi})$ 
for a specific value of the pair $(\theta(\pi, \pi'), |E(\pi)\cap E(\pi')|)$. Our next lemma is 
very similar to Lemma~3.3 in \cite{Ding13}.
\begin{lemma}
\label{conditional_prob}
Let $\pi \in \Pi_\ell^*$ and $\pi' \in A_{i,j}$ with $1 \leq i \leq j \leq \ell$. Then there exist 
absolute constants $\eta_1, C_4>0$ such that for $0 < \eta < \eta_1$, $\zeta_2 > 1 \vee 
\sqrt{2C^*/\mathrm{e}}$ and $\ell \geq \zeta_2^2/\eta$ we have
\begin{equation}
\label{cond_est}
\mathbb{P}(G_{\pi'}|G_{\pi}) \leq C_4(1 + o(1))\mathbb{P}(G_{\pi})n^j\sqrt{\ell/\eta} 
\mathrm{e}^{-j\eta}\mathrm{e}^{1000\zeta_2 i/\sqrt{\eta}}\,.
\end{equation}
\end{lemma}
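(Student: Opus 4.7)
The plan is to decompose $G_{\pi'}$ into the total-weight event $\{W(\pi') \in [\lambda\ell - 1, \lambda\ell]\}$ and the max-deviation event $\{M(\pi') \leq r W(\pi')/(\lambda\ell)\}$ with $r = \zeta_2/\sqrt{\eta}$, and estimate each conditional factor separately after fixing the weights of the $j$ edges in $S = E(\pi) \cap E(\pi')$. Those shared weights are $G_\pi$-measurable, while the $\ell - j$ remaining edge-weights of $\pi'$ are independent exponentials of mean $n$.

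For the weight constraint, let $T$ denote the $G_\pi$-measurable sum of the shared weights. Since $M(\pi) \leq r$ on $G_\pi$, a partial-sum control gives $T = j\lambda + O(r)$ uniformly on $G_\pi$. The sum of the unshared edges of $\pi'$ is independent $\mathrm{Gamma}(\ell - j, 1/n)$, so the probability that it falls in the resulting length-$1$ window is at most $(\lambda\ell - T)^{\ell - j - 1}/((\ell - j - 1)! \, n^{\ell - j})$. Dividing this by the analogous quantity $(\lambda\ell)^{\ell - 1}/((\ell - 1)! \, n^{\ell})$ that sits inside $\mathbb P(W(\pi) \in [\lambda\ell - 1, \lambda\ell])$ (hence inside $\mathbb P(G_\pi)$), Stirling yields a ratio $n^j (\lambda \mathrm{e})^{-j}(1 + o(1)) = n^j (1 + \mathrm{e}\eta)^{-j}(1+o(1)) \leq n^j \mathrm{e}^{-\mathrm{e} j \eta}(1+o(1))$.

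For the deviation constraint I would apply Lemma~\ref{cond_dev} to $\pi'$ with $m = i$ intervals (the $S$-components), $q = j$, $\rho\ell = W(\pi')$, and threshold $\sim r$. The hypothesis that each shared segment's weight lies within $2r$ of $\rho \times (\mbox{length})$ follows from $M(\pi) \leq r$ on $G_\pi$ together with $|W(\pi)/\ell - \rho| = O(1/\ell)$, since both $W(\pi)$ and $W(\pi')$ live in the same unit window. The conclusion is a bound of the form $C_3 r \sqrt{j \wedge (\ell - j)} \, p_\ell \, 10^{100 i r} \mathrm{e}^{C^* j/r^2}$, where $p_\ell$ is the unconditional deviation probability that also sits inside $\mathbb P(G_\pi)$. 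The prefactor $r \sqrt{j \wedge (\ell - j)}$ is absorbed into $\sqrt{\ell/\eta}$, and $10^{100 i r} \leq \mathrm{e}^{1000 \zeta_2 i/\sqrt{\eta}}$. Finally, the hypothesis $\zeta_2^2 > 2C^*/\mathrm{e}$ yields $\mathrm{e}^{C^* j/r^2} = \mathrm{e}^{C^* \eta j/\zeta_2^2} \leq \mathrm{e}^{\mathrm{e} j\eta/2}$, which consumes half of the $\mathrm{e}^{-\mathrm{e} j \eta}$ savings from the weight step and leaves the required $\mathrm{e}^{-j\eta}$.

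The main obstacle is the careful book-keeping needed to apply Lemma~\ref{cond_dev} cleanly: one must first condition on $W(\pi)$ and $W(\pi')$ within their respective unit windows and invoke Lemma~\ref{Beta} to decouple the Dirichlet shape of the shared-edge weights from the scale $W(\pi)$, so that both the hypothesis of Lemma~\ref{cond_dev} and the Stirling comparison above can be applied at (essentially) the deterministic values $W(\pi), W(\pi') \approx \lambda\ell$. A related minor point is to verify that $T = j\lambda + O(r)$ suffices to turn $(\lambda\ell - T)^{\ell - j - 1}$ into a deterministic upper bound on $G_\pi$ after Stirling; both discrepancies are of lower order and absorbed into the $(1 + o(1))$ factor of the lemma.
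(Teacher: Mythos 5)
Your proposal follows the paper's proof essentially step for step: the same decomposition into the weight and max-deviation factors, the same use of $M(\pi)\le r$ to control the shared-edge sum, the same Gamma-tail/Stirling comparison, and the same application of Lemma~\ref{cond_dev} with $m=i$, $q=j$. One computational slip: the inequality $(1+\mathrm{e}\eta)^{-j}\le \mathrm{e}^{-\mathrm{e} j\eta}$ goes the wrong way, since $1+x<\mathrm{e}^x$; what the paper actually uses is the weaker (but correct for small $\eta$) bound $1+\mathrm{e}\eta\ge \mathrm{e}^{(1+\mathrm{e}/2)\eta}$, which together with $C^*/\zeta_2^2\le \mathrm{e}/2$ produces exactly the $\mathrm{e}^{-j\eta}$ factor. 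A second, more minor imprecision is that the shared-weight control should be $T\ge \lambda j - 1 - 2ir$ (with the number $i$ of $S$-components appearing), not $T=\lambda j+O(r)$ uniformly; this $i$-dependence is what the paper tracks into the $\mathrm{e}^{O(ir)}$ factor from $\mathfrak p_1$ before absorbing everything into $\mathrm{e}^{1000\zeta_2 i/\sqrt{\eta}}$.
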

\begin{proof}
Denote by $S$ and $S'$ the sets $E(\pi)\cap E(\pi')$ and $E(\pi')\setminus E(\pi)$ respectively. 
By standard calculus, there exists $0<\eta_1 \leq 1$ such that $1 + \mathrm{e}\eta \geq 
\mathrm{e}^{(1 + \mathrm{e}/2)\eta}$ for all $0 < \eta < \eta_1$. Note that $\mathbb{P}(G_{\pi'} 
\mid G_{\pi}) = \mathfrak p_1 \cdot \mathfrak p_2$, where

\begin{eqnarray*}
\mathfrak p_1 &=& \mathbb{P}(\lambda \ell - 1 \leq W(\pi') \leq \lambda \ell\mbox{ }|\mbox{ 
}G_{\pi})\,, \\
                  \mathfrak p_2           &=& \mathbb{P}\big(M(\pi') \leq (\zeta_2 / \sqrt{\eta}).
                  (W(\pi')/\lambda \ell)\mbox{ }|\mbox{ }G_{\pi}, \lambda \ell - 1 \leq W(\pi') 
                  \leq \lambda \ell\big)\,.
\label{decomposition}
\end{eqnarray*}

Since the maximum deviation of a good path from its linear interpolation between starting and 
ending edges is at most $\zeta_2/\sqrt{\eta}$, the weight of an $S$-component, say $s$, is at 
least $W(\pi)|s| / \ell - 2\zeta_2/\sqrt{\eta}$ when $\pi$ is good. Here $|s|$ denotes the number 
of edges in $s$. Adding over all the $\theta(\pi, \pi')$ components of $S$ we get that
$\mbox{$\sum_{e\in S}$}W_e \geq W(\pi)|S| / \ell - 2\theta(\pi, \pi')\zeta_2/\sqrt{\eta} \mbox{ 
on }G_\pi$. As $\pi' \in A_{i, j}$ and weight of a good path is at least $\lambda \ell - 1$, the 
previous inequality implies that on $G_\pi$,
$$\mbox{$\sum_{e\in S}$}W_e \geq \lambda j - 1 - 2i\zeta_2/\sqrt{\eta}\,.$$
Consequently when $j \leq \ell - 1$,
\begin{eqnarray}
\mathfrak p_1&\leq & \mathbb{P}(\mbox{$\sum_{e\in S'}$}W_e \leq \lambda|S'| + 1 + 2i 
\zeta_2/\sqrt{\eta} \mbox{ }|\mbox{ }G_{\pi})\\
&=& \mathbb{P}\big(\mathrm{Gamma}(\ell-j, 1/n) \leq \lambda(\ell - j) + 1 + 2i 
\zeta_2/\sqrt{\eta}\big) \nonumber\\
&\leq & C_4' n^{-(\ell-j)}(\ell-j)^{-1/2}(1 + \mathrm{e}\eta)^{\ell-j} \mathrm{e}^{2i 
\mathrm{e}\zeta_2 / \sqrt{\eta}(1 + \mathrm{e}\eta)}\,, \label{eq-p-1}
\end{eqnarray}
where $C_4'>0$ is an absolute constant and the last inequality used \eqref{gamma_density}. For the 
second term in the right hand side of \eqref{decomposition}, we can apply \eqref{cond_Property} 
and Lemma~\ref{cond_dev} to obtain
\begin{eqnarray}
\mathfrak p_2 \leq  C_3\mathbb{P}\big(M(\pi) \leq \zeta_2/\sqrt{\eta}\mbox{ }|\mbox{ 
}W(\pi)=\lambda l\big)\sqrt{j\wedge (\ell-j)/\eta} 10^{100i\zeta_2 / 
\sqrt{\eta}}\mathrm{e}^{C^*j\eta/\zeta_2^2} \,, \label{cond_prob1}
\end{eqnarray}
when $j \leq \ell - 1$ and $\ell \geq \zeta_2^2 / \eta$ (see the conditions in 
Lemma~\ref{cond_dev}). Using \eqref{cond_Property} again, we get that
\begin{eqnarray*}
\mathbb{P}\big(M(\pi) \leq \zeta_2 / \sqrt{\eta}\mbox{ }|\mbox{ }W(\pi)=\lambda \ell\big) 
& = & \mathbb{P}\big(M(\pi) \leq (\zeta_2 / \sqrt{\eta}).(W(\pi)/\lambda \ell)\mbox{ }|\lambda 
\ell - 1 \leq W(\pi) \leq \lambda \ell\big) \\ 
& = & \mathbb{P}(G_{\pi})/\mathbb{P}(\lambda \ell - 1 \leq W(\pi) \leq \lambda \ell)\\
& = & \mathbb{P}(G_{\pi}) / \mathbb{P}\big(\lambda \ell - 1 \leq \mathrm{Gamma}(\ell, 1/n)\leq 
\lambda \ell\big)\\
&\leq & C_4''(1 + o(1))\mathbb{P}(G_{\pi})\ell!\big(n / \lambda \ell\big)^\ell \,,
\end{eqnarray*}
where $C_4'' > 0$ is an absolute constant and the last inequality follows from 
\eqref{gamma_density}.
Plugging the preceding inequality into \eqref{cond_prob1} and using the fact $\ell! \leq 
\mathrm{e}\sqrt{\ell}(\ell/\mathrm{e})^{\ell}$ (Stirling's approximation)
$$\mathfrak p_2 \leq \mathrm{e}C_3C_4''(1 + o(1))\mathbb{P}(G_\pi)n^\ell \sqrt{\ell(\ell - 
j)/\eta}(1 + \mathrm{e}\eta)^{-\ell}10^{100i\zeta_2 / 
\sqrt{\eta}}\mathrm{e}^{C^*j\eta/\zeta_2^2}\,.$$
Combined with \eqref{eq-p-1}, it yields that
\begin{eqnarray*}
\mathbb{P}(G_{\pi'}|G_{\pi}) &\leq & \mathrm{e}C_3C_4' C_4''\zeta_2(1 + o(1))\mathbb{P}
(G_{\pi})\sqrt{\ell / \eta} n^j (1 + e \eta)^{-j} 10^{100i\zeta_2 / 
\sqrt{\eta}}\mathrm{e}^{C^*j\eta/\zeta_2^2}\mathrm{e}^{2ie\zeta_2 / \sqrt{\eta}(1 + e\eta)} \,.
\end{eqnarray*}
Since $\zeta_2 \geq \sqrt{2C^*/e}$ and $\eta < \eta_1$ we have
\begin{eqnarray*}
\mathbb{P}(G_{\pi'}|G_{\pi}) &\leq & \mathrm{e}C_3C_4' C_4''(1 + o(1))\mathbb{P}
(G_{\pi})n^j\sqrt{\ell/\eta} \mathrm{e}^{-j\eta}\mathrm{e}^{1000\zeta_2 i/\sqrt{\eta}}
\end{eqnarray*}
provided $j \leq \ell - 1$. The case $j = \ell$ can also be easily accommodated. To this end let 
us first compute $\mathbb{P}(G_{\pi})$. It follows from \eqref{gamma_density} and 
Lemma~\ref{cond_dev1} that
\begin{equation*}
\mathbb{P}(G_{\pi}) \geq (1 + o(1))(1 - \mathrm{e}^{-1/\lambda})(\lambda \ell / n)^{\ell} (1 / 
\ell!) \mathrm{e}^{-C^* \ell \eta / \zeta_2^2}\,.
\end{equation*}
 Applying Stirling's formula again, we get that for $\zeta_2 \geq \sqrt{2C^*/e}$ and $\eta < 
 \eta_1$, \begin{equation*}
\mathbb{P}(G_{\pi}) \geq C_4'''(1 + o(1))n^{-\ell}\ell^{-1/2}\mathrm{e}^{\ell\eta}\,,
\end{equation*}
for an absolute constant $C_4'''>0$. Hence, with the choice of $C_4 = 1/C_4''' \vee 
\mathrm{e}C_3C_4' C_4''$ the right hand side of \eqref{cond_est} is at least 1, and thus 
\eqref{cond_est} holds in this case.
\end{proof}
Armed with Lemma~\ref{conditional_prob}, we can now obtain an upper bound on $\sum_{\pi' \in 
\Pi^*_{\ell,\pi}}\mathbb{P}(G_{\pi'}|G_{\pi})$. Similarly we can bound $\sum_{\pi' \in 
\Pi^*_\ell}\mathbb{P}(G_{\pi'}|G_{\pi})$ which is useful for the computation of $\E((N_\ell^*)^2)$ 
in view of the following simple observation:

\begin{equation}
\label{sec_mom_cond}
\E ((N_\ell^*)^2) = \mbox{$\sum_{\pi \in \Pi_\ell^*}$}\mathbb{P}(G_\pi)\mbox{$\sum_{\pi' \in 
\Pi^*_{\ell}}$}\mathbb{P}(G_{\pi'}|G_{\pi}) = \E(N_\ell^*)\mbox{$\sum_{\pi' \in 
\Pi^*_{\ell}}$}\mathbb{P}(G_{\pi'}|G_{\pi})\,,
\end{equation}
where the last equality follows from the fact that $\sum_{\pi' \in 
\Pi^*_{\ell}}\mathbb{P}(G_{\pi'}|G_{\pi})$ is independent of $\pi$.

\begin{lemma}
\label{lemma_crucial1}
Let $0 < \zeta_1 < 1/4$ and let $\zeta_2, \ell, \eta$ satisfy the same conditions as stated in 
Lemma~\ref{conditional_prob}. Then there exists an absolute constant $C_5 > 0$ such that,
\begin{align}
&\mbox{$\sum_{\pi' \in \Pi^*_{\ell,\pi}}$}\mathbb{P}(G_{\pi'}|G_{\pi}) \leq C_5(1 + 
o(1))\mathrm{e}^{1000\zeta_2/ \sqrt{\eta}}\sqrt{\ell^7/\eta}\tfrac{\mathbb{E} N^*_\ell}{n}\,, 
\label{sum1}\\
&\mbox{$\sum_{\pi' \in \Pi^*_{\ell}}$}\mathbb{P}(G_{\pi'}|G_{\pi}) \leq (1 + o(1)) 
\mathbb{E}N^*_\ell \,. \label{sum2}
\end{align}
\end{lemma}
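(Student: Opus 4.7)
My plan is to bound both sums by partitioning $\Pi^*_{\ell}$ according to the pair $(i,j) = (\theta(\pi,\pi'), |E(\pi) \cap E(\pi')|)$, and then to apply Lemma~\ref{conditional_prob} together with the cardinality bound in Lemma~\ref{up_bnd_lemma}. The case $j=0$ (i.e.\ no shared edges) must be handled separately because Lemma~\ref{conditional_prob} assumes $j \geq 1$ and also because when paths share no edges we in fact have full independence: $\mathbb{P}(G_{\pi'} \mid G_\pi) = \mathbb{P}(G_{\pi'})$.

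For the second sum \eqref{sum2}, I would first note that the $j=0$ contribution is simply
\[
\sum_{\pi' \in \Pi^*_\ell,\, E(\pi) \cap E(\pi') = \emptyset} \mathbb{P}(G_{\pi'}) \leq \mathbb{E}N^*_\ell,
\]
so the goal is to show the remaining sum over $j \geq 1$ is $o(\mathbb{E}N^*_\ell)$. Using \eqref{cond_est} and the fact that $\mathbb{P}(G_\pi) n^\ell \cdot |\Pi^*_\ell|/n^{\ell+1} \approx \mathbb{E}N^*_\ell/n$ (since $|\Pi^*_\ell| = (1+o(1)) n_*^{\ell+1}$ and $n_* = (1-\zeta_1\eta)n$), together with $|A_{i,j}| \leq \ell^{3i} n_*^{\ell+1-i-j}$, each term is bounded by
\[
\ell^{3i} n^{\ell+1-i-j} \cdot C_4(1+o(1)) \mathbb{P}(G_\pi) n^j \sqrt{\ell/\eta}\, e^{-j\eta} e^{1000\zeta_2 i/\sqrt{\eta}} \;\approx\; C_4(1+o(1)) \frac{\mathbb{E}N^*_\ell}{n} \cdot \ell^{3i} n^{1-i} \sqrt{\ell/\eta}\, e^{-j\eta} e^{1000\zeta_2 i/\sqrt{\eta}}.
\]
Summing the geometric series in $j \geq i$ gives an additional factor of order $1/\eta$, and the sum over $i \geq 1$ is dominated by $i=1$ (since subsequent terms carry an extra factor of $\ell^3/n \cdot e^{1000\zeta_2/\sqrt{\eta}}$, which is $o(1)$ as $n \to \infty$ with $\ell,\eta$ fixed). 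Altogether the $j \geq 1$ part is $O(\mathbb{E}N^*_\ell \cdot \ell^{7/2} e^{1000\zeta_2/\sqrt{\eta}}/(n \eta^{3/2})) = o(\mathbb{E}N^*_\ell)$, yielding \eqref{sum2}.

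For the first sum \eqref{sum1}, the set $\Pi^*_{\ell,\pi}$ consists of paths sharing at least one vertex with $\pi$, so I split it into (a) paths sharing a vertex but no edge, and (b) paths with $j \geq 1$. For (a), there are at most $O(\ell^2) \cdot n^{\ell-1}$ such paths (choose one of $\ell+1$ vertices of $\pi$, one of its $\ell$ positions in $\pi'$, and fill in the remaining $\ell-1$ vertices), so this part contributes at most $O(\ell^2/n^2) \cdot \mathbb{E}N^*_\ell$, which is absorbed easily in the claimed bound. For (b), the computation above (without the outer sum over $\pi$) already shows that the $i=1$ term gives a contribution of order $\sqrt{\ell^7/\eta} \cdot e^{1000\zeta_2/\sqrt{\eta}} \cdot \mathbb{E}N^*_\ell/n$ after combining the $\sqrt{\ell/\eta}$ factor from Lemma~\ref{conditional_prob} with the $\ell^3$ factor from Lemma~\ref{up_bnd_lemma} and the $O(1/\eta)$ from the $j$-sum and absorbing $1/\eta^{\,O(1)}$ into the exponential $e^{1000\zeta_2/\sqrt{\eta}}$. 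The $i \geq 2$ terms are again suppressed by powers of $\ell^3/n$, so they are negligible.

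The main obstacle I anticipate is keeping careful track of the constants and polynomial factors, in particular: (i) verifying that the ratio $\mathbb{P}(G_\pi)\, n^\ell/n_*^{\ell+1}$ really does identify with $(1+o(1))\mathbb{E}N^*_\ell/n$ uniformly in $\pi$, which uses that $n_* = (1-\zeta_1\eta)n$ with $\zeta_1 < 1/4$ and the independence of $\mathbb{P}(G_\pi)$ from $\pi$; and (ii) ensuring that the $i \geq 2$ contributions and the $j=0$ but vertex-sharing contribution are genuinely lower order, so that the stated form $\sqrt{\ell^7/\eta}\,\mathbb{E}N^*_\ell/n$ captures the true leading term. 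The exponential factor $e^{1000\zeta_2/\sqrt{\eta}}$ is large but fixed once $\eta,\zeta_2$ are fixed, so it does not interfere with the $n \to \infty$ limit.
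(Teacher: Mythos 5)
Your approach matches the paper's own proof very closely: partition $\Pi^*_\ell$ by the pair $(i,j) = (\theta(\pi,\pi'), |E(\pi)\cap E(\pi')|)$, apply Lemma~\ref{conditional_prob} together with the cardinality bound $|A_{i,j}| \leq \ell^{3i} n_*^{\ell+1-i-j}$ from Lemma~\ref{up_bnd_lemma}, identify $n_*^{\ell+1}\mathbb{P}(G_\pi)$ with $(1+o(1))\mathbb{E}N^*_\ell$, observe that $i\geq 2$ terms carry an extra factor $O(\ell^3 n^{-1} e^{1000\zeta_2/\sqrt{\eta}}) = o(1)$, sum the geometric series in $j$ to get an $O(1/\eta)$ factor (using $\zeta_1<1/4$ so the ratio $e^{-\eta}(1-\zeta_1\eta)^{-1}$ stays bounded away from $1$ by roughly $\eta/2$), and then separately control the paths sharing vertices but no edges. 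This is exactly the decomposition in the paper.

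Two small slips worth flagging. First, in your count of paths sharing exactly one vertex with $\pi$ but no edges, there are $\ell$ remaining free vertices of $\pi'$ to place, not $\ell-1$ (a path of length $\ell$ has $\ell+1$ vertices). The correct count is $O(\ell^2)\, n^{\ell}$, giving a contribution of $O(\ell^2/n)\cdot \mathbb{E}N^*_\ell$ rather than $O(\ell^2/n^2)\cdot \mathbb{E}N^*_\ell$. This is still absorbed in the claimed bound since $\ell^2 \leq \sqrt{\ell^7/\eta^3}\,e^{1000\zeta_2/\sqrt{\eta}}$, but you should not understate it by an extra power of $n$; the paper in fact records this term as $(8+o(1))\ell^2\,\mathbb{E}N^*_\ell/n$. (You should also acknowledge, as the paper does via the sets $B_i$ for $i\geq 2$, the paths sharing $\geq 2$ vertices but no edges; these are lower order in $n$ and do not change the leading term.) Second, your computed exponent on $\eta$ is $\sqrt{\ell^7/\eta^3}$ rather than the $\sqrt{\ell^7/\eta}$ appearing in the lemma's display; this actually agrees with what the paper's proof and its later use in \eqref{expected_edge} produce, so there is no need to ``absorb'' $\eta^{-O(1)}$ into the exponential. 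Your calculation is the internally consistent one.
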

\begin{proof}

By Lemmas \ref{conditional_prob} and \ref{up_bnd_lemma}, we get that for $1 \leq i \leq j \leq 
\ell$,
\begin{eqnarray}
\mbox{$\sum_{\pi' \in A_{i,j}} $}\mathbb{P}(G_{\pi'}|G_{\pi}) &\leq& (1 + o(1))n_*^{\ell + 
1}\mathbb{P}(G_{\pi})\tfrac{\xi(\eta,\ell,i,j, \zeta_1)}{n^i} \leq 
                                                          (1 + 
                                                          o(1))\mathbb{E}N^*_\ell\tfrac{\xi(\eta,
                                                          \ell,i,j, \zeta_1)}
                                                          {n^i}\label{secstage1}\,,
\end{eqnarray}
where $\xi(\eta,\ell,i,j, \zeta_1)$ is a number depending only on $(\eta, 
\ell, i, j, \zeta_1)$ (so in particular, $\xi(\eta,\ell,i,j, \zeta_1)$ does not depend 
on $n$). It is also clear that 
$$\mbox{$\sum_{\pi' \in A_{0, 0}} $}\mathbb{P}(G_{\pi'}|G_{\pi}) \leq \mbox{$\sum_{\pi' \in A_{0, 
0}} $}\mathbb{P}(G_{\pi'}) \leq \E N^*_\ell\,.$$
Combined with \eqref{secstage1}, it yields \eqref{sum2}. It remains to prove \eqref{sum1}. To this 
end, we note that the major contribution to the term $\sum_{\pi' \in \Pi^*_{\ell,\pi}} \mathbb{P}
(G_{\pi'}|G_{\pi})$ comes from those paths $\pi'$ with $\theta(\pi,\pi') = 1$ or $|V(\pi') \cap 
V(\pi)| = 1$. Thus, we revisit \eqref{secstage1} for the case of $i = 1$. By Lemmas 
\ref{conditional_prob} and  \ref{up_bnd_lemma} again, we get that
\begin{eqnarray}
\mbox{$\sum_{1 \leq j \leq \ell} \sum_{A_{1,j}}$}\mathbb{P}(G_{\pi'} | G_{\pi}) 
&\leq & 2C_4 (1 + o(1))\mathrm{e}^{1000\zeta_2 / 
\sqrt{\eta}}\sqrt{\ell^{7}/\eta}n_*^{\ell+1}\mathbb{P}(G_{\pi}) n^{-1} \mbox{$\sum_{1\leq j \leq 
\ell}$} \mathrm{e}^{-j\eta}(1 - \zeta_1 \eta)^{-j} \nonumber\\
& \leq & 2C_4 (1 + o(1))\mathrm{e}^{1000\zeta_2 / \sqrt{\eta}}\sqrt{\ell^{7}/\eta} 
\mathbb{E}N^*_\ell (n(1-\mathrm{e}^{-\frac{\eta}{2}}))^{-1} \nonumber \\
&\leq &  8C_4 (1 + o(1))\mathrm{e}^{1000\zeta_2 / 
\sqrt{\eta}}\sqrt{\ell^{7}/\eta^3}\mathbb{E}N^*_\ell n^{-1}\,, \label{eq-A-1-j}
\end{eqnarray}
where the last two inequalities follow from the facts that $\zeta_1 < 1/4$ and $\mathrm{e}^{-\eta 
/ 2} \leq 1 - \eta/4$ whenever $0 < \eta < 1$. 
We still need to consider paths that share vertices with $\pi$ but no edges. For $1\leq i\leq 
\ell$, define $B_i$ to be the collection of paths which shares $i$ vertices with $\pi$ but no 
edges, i.e., 
\begin{equation*}
B_i = \{\pi' \in \Pi_\ell^* : |V(\pi') \cap V(\pi)| = i, E(\pi') \cap E(\pi) = \emptyset\}\,.
\end{equation*}
We need an upper bound on the size of $B_i$. To this end notice that there are $\tbinom{\ell + 1}
{i}$ many choices for $V(\pi') \cap V(\pi)$ as cardinality of the latter is $i$ and these vertices 
can be placed along $\pi'$ in at most $\tbinom{\ell + 1}{i}i!$ many different ways. Also the 
number of ways we can choose the remaining $\ell + 1 - i$ vertices is at most $n_*^{\ell + 1 - 
i}$. Multiplying these numbers we get
$$|B_i| \leq \tbinom{\ell + 1}{i}^2i!n_*^{\ell + 1 - i}\,.$$
Since the edge sets are disjoint, $\mathbb{P}(G_{\pi'}|G_{\pi}) = \mathbb{P}(G_{\pi})$ for all 
$\pi' \in B_i$ and  $1\leq i \leq \ell$. So we have
\begin{equation}
\mbox{$\sum_{\pi' \in B_{i}} $}\mathbb{P}(G_{\pi'}|G_{\pi}) \leq (1+o(1)) \tbinom{\ell + 1}{i}^2i!
(1 - \zeta_1\eta)^{-i}\tfrac{\mathbb{E}N^*_\ell}{n^i} \leq (8+o(1)) 
\ell^2\tfrac{\mathbb{E}N^*_\ell}{n} \,.\label{secstage2}
\end{equation}
Combined with \eqref{eq-A-1-j}, it completes the proof of \eqref{sum1}.
\end{proof}
We will now proceed with our plan of finding a large independent subset of $\mathcal{G}_n$. For 
any two paths $\pi$ and $\pi'$ in $\Pi^*_{\ell}$, define an event 
$$H_{\pi,\pi'} = H_{\pi,\pi'; \eta, \zeta_2} = \begin{cases}
G_{\pi} \cap G_{\pi'} &\mbox{ if } V(\pi) \cap V(\pi') \neq \emptyset\,,\\
\emptyset, &\mbox{ otherwise}\,.
\end{cases}$$
Writing $N_{\ell}'= N_{\ell; \eta, \zeta_1, \zeta_2}' = \sum_{\pi, \pi' \in 
\Pi_{\ell}^*}\mathbf{1}_{H_{\pi,\pi'}}$, we see that $N'_\ell = 2 |E(\mathcal{G}_n)| + |
V(\mathcal{G}_n)|$. Also notice that $N^*_\ell = |V(\mathcal{G}_n)|$. As an immediate consequence 
of Lemma~\ref{lemma_crucial1}, we can compute an upper bound of $\mathbb{E}N_{\ell}'$ as follows:
\begin{eqnarray}
\mathbb{E}N_{\ell}'  =  \mbox{$\sum_{\pi \in \Pi^*_\ell} $} \mathbb{P}(G_\pi) \mbox{$\sum_{\pi' 
\in \Pi^*_{\ell,\pi}}$}\mathbb{P}(G_{\pi'}|G_{\pi}) \leq C_5(1 + 
o(1))\mathrm{e}^{1000\zeta_2/\sqrt{\eta}}\sqrt{\ell^{7}/\eta^3}\tfrac{(\mathbb{E}N^*_{\ell})^2} 
{n}.\label{expected_edge}
\end{eqnarray}
If $N_\ell^*$ and $N_\ell'$ are concentrated around their respective means in the sense that 
$N_\ell^* = \E N_\ell^*(1 + o(1))$ and $N_\ell' = \E N_\ell'(1 + o(1))$ with high probability, 
then we can use Lemma~\ref{relation} and \eqref{expected_edge} to derive a lower bound on the size 
of a maximum independent subset of $\mathcal G_n$. For this purpose, it suffices to show that 
$\mathbb{E}((N_\ell^*)^2) = (\mathbb{E}N_\ell^*)^2(1 + o(1))$ and $\mathbb{E}((N_\ell')^2) = 
(\mathbb{E}N_\ell')^2(1 + o(1))$. The former has already been addressed by \eqref{sum2} (see 
\eqref{sec_mom_cond}). For the latter we need to estimate contributions from terms like 
$\mathbb{P}(H_{\pi_1,\pi_2} \cap H_{\pi_3,\pi_4})$ in the second moment calculation for $N_\ell'$. 
Our next lemma will be useful for this purpose.
\begin{lemma}
\label{count_vertices}
Let $\pi_1, \pi_2, \pi_3, \pi_4$ be paths in $\Pi_\ell^*$ such that  $|E(\pi_3 \cup \pi_4)| = 
2\ell - j$ and $|E(\pi_1 \cup \pi_2) \cap E(\pi_3 \cup \pi_4)| = j'$ where $0 \leq j \leq \ell$ 
and $1 \leq j' \leq 2\ell - j$. Also assume that $V(\pi_3 \cap \pi_4) \neq \emptyset$. Then,
\begin{equation}
\label{vertex_count}
|V(\pi_3) \cap V(\pi_4)| + |V(\pi_3 \cup \pi_4) \cap V(\pi_1 \cup \pi_2)| \geq j + j' +2\,.
\end{equation}
\end{lemma}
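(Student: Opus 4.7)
The plan is to extract, from the hypotheses on $|E(\pi_3\cup\pi_4)|$ and $V(\pi_3\cap\pi_4)\neq\emptyset$, a bound on the cyclomatic (first Betti) number of $\pi_3\cup\pi_4$ via Euler's relation, and then transfer this bound to a lower bound on $|V(F)|$, where $F$ is the edge-subgraph of $\pi_3\cup\pi_4$ carved out by the $j'$ common edges. Since $V(F)\subseteq V(\pi_3\cup\pi_4)\cap V(\pi_1\cup\pi_2)$, this vertex count will feed directly into the desired inequality. Write $a:=|V(\pi_3)\cap V(\pi_4)|$ and $b:=|V(\pi_3\cup\pi_4)\cap V(\pi_1\cup\pi_2)|$; the target is $a+b\geq j+j'+2$.

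The first step is to translate the edge hypothesis: since $|E(\pi_3)|=|E(\pi_4)|=\ell$, inclusion-exclusion gives $|E(\pi_3)\cap E(\pi_4)|=j$, and inclusion-exclusion on vertices gives $|V(\pi_3\cup\pi_4)|=2(\ell+1)-a$. The assumption $V(\pi_3\cap\pi_4)\neq\emptyset$ means $\pi_3$ and $\pi_4$ share a vertex, so $\pi_3\cup\pi_4$ is a connected graph. Applying the Euler relation $|V|-|E|=c-\mathrm{cyc}$ to $\pi_3\cup\pi_4$ then yields
\[
\mathrm{cyc}(\pi_3\cup\pi_4)=(2\ell-j)-(2\ell+2-a)+1=a-j-1.
\]
In particular $a\geq j+1$, but I will need the full identity below.

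The second step is to introduce $F$, the subgraph of $\pi_3\cup\pi_4$ whose edges are the $j'$ edges in $E(\pi_1\cup\pi_2)\cap E(\pi_3\cup\pi_4)$ and whose vertices are the endpoints of these edges. Every independent cycle in $F$ is an independent cycle in $\pi_3\cup\pi_4$ (it is the same edge set, viewed in a supergraph), so $\mathrm{cyc}(F)\leq \mathrm{cyc}(\pi_3\cup\pi_4)=a-j-1$. Since $j'\geq 1$, the number of connected components of $F$ satisfies $c(F)\geq 1$. Applying Euler's relation to $F$ yields
\[
|V(F)|=|E(F)|+c(F)-\mathrm{cyc}(F)\geq j'+1-(a-j-1)=j+j'-a+2.
\]
Because $V(F)\subseteq V(\pi_3\cup\pi_4)\cap V(\pi_1\cup\pi_2)$, we have $b\geq |V(F)|$, and rearranging gives $a+b\geq j+j'+2$, which is \eqref{vertex_count}.

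I do not anticipate any serious obstacle: the argument is topological-combinatorial and does not interact with $\pi_1,\pi_2$ beyond knowing that the $j'$ common edges live inside $\pi_3\cup\pi_4$. The only point requiring care is the monotonicity $\mathrm{cyc}(F)\leq\mathrm{cyc}(\pi_3\cup\pi_4)$ under passing to an edge-subgraph, which is just the observation that cycle subspaces embed; and the invocation of $c(F)\geq 1$, which relies on the hypothesis $j'\geq 1$ stated in the lemma.
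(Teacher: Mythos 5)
Your proof is correct and takes a genuinely different route from the paper's. The paper works combinatorially: it counts the $k+1$ connected components of $\pi_3\cap\pi_4$, shows $|V(\pi_3\cap\pi_4)|=j+k+1$, and then explicitly exhibits $k$ edges whose removal makes $\pi_3\cup\pi_4$ acyclic (one edge from each maximal segment of $E(\pi_4)\setminus E(\pi_3)$ that joins two components of $\pi_3\cap\pi_4$ along $\pi_4$), so that the subgraph of common edges inside the resulting forest contributes at least $j'-k+1$ vertices. Your argument replaces the explicit acyclification by two applications of the Euler relation plus the monotonicity of the cyclomatic number under taking edge-subgraphs: you compute $\mathrm{cyc}(\pi_3\cup\pi_4)=a-j-1$ directly (using connectivity, which follows from $V(\pi_3)\cap V(\pi_4)\neq\emptyset$), then bound $\mathrm{cyc}(F)$ by this quantity, and read off the vertex count of $F$. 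The two proofs are closely linked -- the paper's $k$ is exactly your $\mathrm{cyc}(\pi_3\cup\pi_4)$ -- but yours sidesteps the structural analysis of how the shared components sit inside $\pi_4$, trading it for the standard fact that the cycle space of a subgraph embeds in that of the ambient graph. The result is shorter and more conceptual, at the cost of invoking a slightly less elementary tool than the paper's bare-hands segment argument.
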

\begin{figure}[!ht]
 \centering\includegraphics[width=0.8\textwidth]{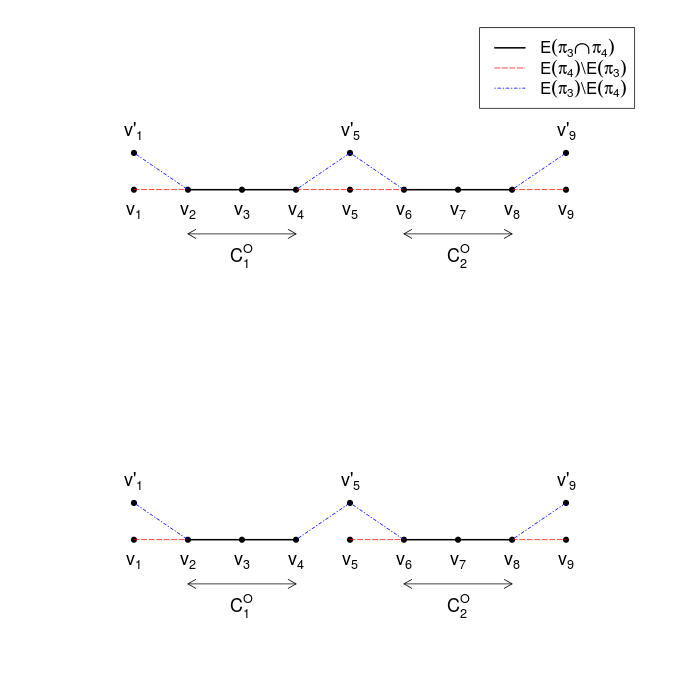}
 \caption{{\bf Removing edges from union of two paths.} In these figures the sequences of vertices 
 $v_1, v_2, v_3, v_4, v_5, v_6, v_7, v_8, v_9$ and $v_1', v_2, v_3, v_4, v_5', v_6, 
 v_7, v_8, v_9'$ define the paths $\pi_4$ and $\pi_3$ respectively. $C^O_1$ and $C^O_2$ are the 
 two connected components of $\pi_3 \cap \pi_4$. In the figure at the top, the vertices $v_4, 
 v_5, v_6, v_5'$ define a cycle. After removing the edge $(v_4, v_5)$ from the only segment in 
 $E(\pi_4) \setminus E(\pi_3)$ between $C^O_1$ and $C^O_2$, we get an acyclic graph displayed at 
 the bottom.}
 \label{fig:fig2}
\end{figure}
\begin{proof}
Suppose the graph $\pi_3 \cap \pi_4$ has exactly $k + 1$ (connected) components namely $C^O_1, 
\cdots, C^O_{k+1}$. Notice that $k$ is nonnegative as $\pi_3 \cap \pi_4 \neq \emptyset$. Since $|
E(\pi_3 \cap \pi_4)| = j$ and $\pi_3 \cap \pi_4$ is acyclic with $k + 1$ components, we have that 
$|V(\pi_3 \cap \pi_4)| = j + k + 1$. Now suppose it were shown that $\pi_3 \cup \pi_4$ can be made 
acyclic by removing at most $k$ edges while keeping the vertex set same and call this new graph as 
$H$. One would then have,
\begin{equation*}
|V\big(H \cap (\pi_1 \cup \pi_2)\big)| \geq |E\big(H \cap (\pi_1 \cup \pi_2)\big)| + 1 \geq |
E\big((\pi_3 \cup \pi_4) \cap (\pi_1 \cup \pi_2)\big)| - k + 1 = j' - k + 1\,.
\end{equation*}
Adding this to $|V(\pi_3 \cap \pi_4)| = j + k + 1$ would immediately give \eqref{vertex_count}. In 
the remaining part of this proof we will show that one can remove $k$ edges from $\pi_3 \cup 
\pi_4$ so that the resulting graph becomes acyclic.\par

Let $\mathcal C$ be a cycle in $\pi_3 \cup \pi_4$. Since $\pi_3$ and $\pi_4$ are acyclic, 
$\mathcal C$ consists of an alternating sequence of segments in $E(\pi_4) \setminus E(\pi_3)$ 
and $E(\pi_3) \setminus E(\pi_4)$ interspersed with segments in any one of the $C^O_i$'s (possibly 
trivial i.e.~consisting of a single vertex). This implies that for some $1 \leq i, i' \leq 
k+1$, $\mathcal C$ contains a (nontrivial i.e.~of positive length) segment in $E(\pi_4) \setminus 
E(\pi_3)$ joining $C^O_i$ and $C^O_{i'}$. In fact $i \neq i'$ since $\pi_4$ is acyclic. Hence the 
only case we need to consider is when $k \geq 1$. As $\pi_4$ is a path, $C^O_1, C^O_2, \cdots, 
C^O_{k+1}$'s are vertex-disjoint segments (possibly trivial) aligned along $\pi_4$ in some order 
with $k$ intervening (nontrivial) segments in $E(\pi_4) \setminus E(\pi_3)$. Pick one edge from 
each of these $k$ segments. It follows from the discussions so far that $\mathcal C$ must contain 
one of these edges. Consequently removing these $k$ edges from $\pi_3 \cup \pi_4$ would make the 
resulting graph acyclic. We refer the readers to Figure \ref{fig:fig2} for an illustration.
\end{proof}
We will now use \eqref{vertex_count} and Lemma~\ref{lemma_crucial1} to show that $N^*_\ell$ and 
$N_\ell'$ concentrate around their expected values.
\begin{lemma}
\label{lemma_crucial3}
Assume the same conditions on $\zeta_1, \zeta_2$, $\ell$ and $\eta$ as in 
Lemma~\ref{lemma_crucial1}. Then there exists $g_{\ell, \eta} = g_{\ell, \eta; \zeta_1, \zeta_2}: 
\mathbb N \mapsto [0, \infty)$ depending on $\ell, \eta$ (and $\zeta_1, \zeta_2$) with $g_{\ell, 
\eta}(n)\to 0$ as $n\to \infty$ such that the 
following hold:\\
(1) $\mathbb{P}\big(|N^*_\ell - \mathbb{E} N^*_\ell|  \leq g_{\ell,\eta}(n) 
\mathbb{E}N^*_\ell\big) \to 1$ as $n \to \infty$;\\
(2) $\mathbb{P}\big(|N_\ell' - \mathbb{E} N_\ell'|  \leq g_{\ell,\eta}(n)\mathbb{E} N_\ell'\big) 
\to 1$ as $n \to \infty$.
\end{lemma}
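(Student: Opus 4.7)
The plan is to derive both parts from Chebyshev's inequality once I show that $\mathrm{Var}(N^*_\ell)$ and $\mathrm{Var}(N_\ell')$ are of order $o_n(1)$ times the squares of the respective means; the function $g_{\ell,\eta}(n)$ can then be taken to decay to $0$ at any rate slower than the square root of the corresponding $o_n(1)$ factor, so that Chebyshev's bound itself tends to $0$.

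For part (1), the identity \eqref{sec_mom_cond} combined with the bound \eqref{sum2} from Lemma~\ref{lemma_crucial1} immediately gives $\mathbb{E}((N_\ell^*)^2) \leq (1+o(1))(\mathbb{E}N_\ell^*)^2$, hence $\mathrm{Var}(N^*_\ell) = o_n(1)(\mathbb{E}N_\ell^*)^2$, and the Chebyshev argument sketched above finishes the proof.

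For part (2), I would expand
\[
\mathbb{E}((N_\ell')^2) = \sum_{(\pi_1,\pi_2),(\pi_3,\pi_4)} \mathbb{P}(H_{\pi_1,\pi_2} \cap H_{\pi_3,\pi_4})
\]
and split the sum according to whether $E(\pi_1 \cup \pi_2) \cap E(\pi_3 \cup \pi_4)$ is empty. In the edge-disjoint case, the two events involve disjoint collections of exponential weights and are therefore independent, so the contribution is at most $(\mathbb{E}N_\ell')^2$. The remaining task is to show that the sum over quadruples with at least one common edge is $o_n(1)(\mathbb{E}N_\ell')^2$. I would further partition by $j = 2\ell - |E(\pi_3 \cup \pi_4)|$ (the edge-overlap between $\pi_3$ and $\pi_4$) and $j' = |E(\pi_1 \cup \pi_2) \cap E(\pi_3 \cup \pi_4)| \geq 1$. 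Lemma~\ref{count_vertices} then implies that the vertices of $\pi_3 \cup \pi_4$ lying outside $V(\pi_1 \cup \pi_2)$ number at most $2\ell - j - j'$, so the number of quadruples realising a given overlap pattern relative to a fixed good pair $(\pi_1,\pi_2)$ is at most $n^{2\ell - j - j'}$ times a combinatorial factor depending only on $\ell$.

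For each such quadruple I would bound $\mathbb{P}(H_{\pi_3,\pi_4} \mid H_{\pi_1,\pi_2})$ by the two-step decomposition used in the proof of Lemma~\ref{conditional_prob}: the Gamma density \eqref{gamma_density} controls the total weight on the $2\ell - j - j'$ fresh edges, while Lemma~\ref{cond_dev} combined with \eqref{cond_Property} controls the maximum-deviation constraints inside $G_{\pi_3}$ and $G_{\pi_4}$. Multiplying these factors by the combinatorial count, summing over $j \geq 0$ and $j' \geq 1$, and comparing against $(\mathbb{E}N_\ell')^2$ as computed from \eqref{expected_edge} should yield a residual contribution of the form $\mathrm{e}^{O_{\ell,\eta}(1)} n^{-1} (\mathbb{E}N_\ell')^2 = o_n(1)(\mathbb{E}N_\ell')^2$. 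The main obstacle will be this $j' \geq 1$ bookkeeping: the overlap now involves four paths rather than two, so Lemma~\ref{count_vertices} has to be used to extract an extra factor of $n^{-1}$ for every shared edge beyond what a product bound for two independent pairs would give, and some care is required to confirm that all remaining combinatorial and probabilistic prefactors depend only on $\ell$ and $\eta$ and not on $n$, so that the variance bound really is of the form $o_n(1)(\mathbb{E}N_\ell')^2$.
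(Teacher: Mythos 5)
Your approach mirrors the paper's: part (1) is identical (second moment via \eqref{sec_mom_cond} and \eqref{sum2}, then Chebyshev/Markov), and for part (2) you correctly identify the split by whether $E(\pi_1\cup\pi_2)\cap E(\pi_3\cup\pi_4)$ is empty, the role of Lemma~\ref{count_vertices} in providing $n_1 + n_2 \geq j + j' + 2$, and the resulting balance between the combinatorial count $O(n^{2\ell+2-n_1-n_2})$ and the conditional probability. Two remarks on where you diverge in detail. First, you propose bounding $\mathbb{P}(H_{\pi_3,\pi_4}\mid H_{\pi_1,\pi_2})$ with the full machinery of Lemma~\ref{cond_dev} and \eqref{cond_Property}, but this is more than needed and would entail tracking the maximum-deviation constraints of four paths simultaneously; the paper instead uses only the crude Gamma tail bound $\mathbb{P}(H_{\pi_3,\pi_4}\mid H_{\pi_1,\pi_2}) = O(n^{j+j'-2\ell})$, exactly as in \eqref{cond_prob_order}, because the vertex count from Lemma~\ref{count_vertices} already supplies the necessary cancellation and yields $\sum_{(\pi_3,\pi_4)}\mathbb{P}(H_{\pi_3,\pi_4}\mid H_{\pi_1,\pi_2}) = O(1)$ per fixed $(\pi_1,\pi_2)$. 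Second, the step from ``overlap contribution is $O(1)\,\mathbb{E}N_\ell'$'' to ``this is $o((\mathbb{E}N_\ell')^2)$'' requires knowing $\mathbb{E}N_\ell' \to \infty$; the paper pins this down by proving $\mathbb{E}N_\ell' = \Theta(n)$ (the lower bound coming from counting pairs $(\pi_1,\pi_2)$ that share exactly one vertex and using $\mathbb{P}(H_{\pi_1,\pi_2}) = \Theta(n^{-2\ell})$). You assume this implicitly when writing the overlap term as $n^{-1}(\mathbb{E}N_\ell')^2$, but you should prove the lower bound on $\mathbb{E}N_\ell'$ to make the comparison rigorous.
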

\begin{proof}
The proof of (1) is rather straightforward. By \eqref{sec_mom_cond} and \eqref{sum2} we see that
\begin{eqnarray*}
\mathbb{E}((N^*_\ell)^2) \leq (\mathbb{E}N^*_\ell)^2(1 + o(1))\,.
\end{eqnarray*}
An application of Markov's inequality then yields Part (1). 
In order to prove Part (2), we first argue that $\mathbb{E}N_\ell' = \Theta(n)$. Similar to the 
computation of \eqref{first_bnd}, we can show that $\mathbb{E}N^*_\ell$ is $O(n)$. But then 
\eqref{expected_edge} tell us that same is also true for $\mathbb{E}N_\ell'$. For the lower bound, 
notice that given any path $\pi_1$ in $\Pi_\ell^*$, there are $\Theta(n^{\ell})$ many paths in 
$\Pi_\ell^*$ that intersect $\pi_1$ in exactly one vertex. Furthermore for any such pair $(\pi_1, 
\pi_2)$ we have
$$\mathbb{P}(H_{\pi_1,\pi_2})= (\mathbb{P}(G_{\pi}))^2 = \Theta(n^{-2\ell}),$$ where the last 
equality follows from \eqref{gamma_density} (see the computation in \eqref{first_bnd}) and 
Lemma~\ref{cond_dev1}. Therefore, we obtain that
\begin{eqnarray*}
\mathbb{E}N_\ell' = \Theta(n^{\ell + 1}) \mbox{$\sum_{\pi_2 \in \Pi_{\ell,\pi_1}^*}$}\mathbb{P}
(G_{\pi_1} \cap G_{\pi_2})\geq  \Theta(n^{\ell + 1}) \Theta(n^{\ell}) \Theta(n^{-2\ell})= 
\Theta(n).
\end{eqnarray*}
Next we estimate $\E ((N_\ell')^2)$. For this purpose, we first consider two fixed $\pi_1, 
\pi_2\in \Pi^*_\ell$ such that $V(\pi_1) \cap V(\pi_2) \neq \emptyset$. For $0 \leq j \leq \ell$ 
and $1 \leq j' \leq 2\ell - j$, let $\Pi_{\pi_1, \pi_2}^{\ell,j,j'}$ be the collection of all 
pairs of paths $(\pi_3, \pi_4) \in \Pi_\ell^*$ such that $|E(\pi_1 \cup \pi_2) \cap E(\pi_3 \cup 
\pi_4)| = j'$ and $|E(\pi_3 \cup \pi_4)| = 2\ell - j$.
For $(\pi_3, \pi_4)\in \Pi_{\pi_1, \pi_2}^{\ell,j,j'}$, we see that $|E(\pi_3 \cup \pi_4) 
\setminus E(\pi_1 \cup \pi_2)| = 2\ell - j - j'$ and thus by a similar reasoning as employed in 
\eqref{cond_prob_order} we get
$$\mathbb{P}(H_{\pi_3, \pi_4}|H_{\pi_1, \pi_2}) = O(n^{j + j' - 2\ell})\,.$$ Now let $\Pi_{\pi_1, 
\pi_2}^{\ell,j,j'}(n_1, n_2) \subseteq \Pi_{\pi_1, \pi_2}^{\ell,j,j'}$ contain all the pairs 
$(\pi_3, \pi_4)$ such that $|V(\pi_3) \cap V(\pi_4)| = n_1 \geq 1$ and  $|V(\pi_3 \cup \pi_4) \cap 
V(\pi_1 \cup \pi_2)| = n_2$. Then $|V(\pi_3 \cup \pi_4) \setminus V(\pi_1 \cup \pi_2)| = 2\ell + 2 
- n_1 - n_2$ and consequently $|\Pi_{\pi_1, \pi_2}^{\ell,j,j'}(n_1, n_2)| = O(n^{2\ell + 2 - n_1 - 
n_2})$.
By Lemma~\ref{count_vertices}, we know that for $n_1 + n_2 \geq j + j' +2$ for $(\pi_3, \pi_4)\in 
\Pi_{\pi_1, \pi_2}^{\ell,j,j'}(n_1, n_2)$. Therefore, 
\begin{equation*}
\mbox{$\sum_{(\pi_3, \pi_4) \in \Pi_{\pi_1, \pi_2}^{\ell,j,j'}}$}\mathbb{P}(H_{\pi_3, \pi_4}|
H_{\pi_1, \pi_2}) = \mbox{$\sum_{1\leq n_1, n_2 \leq 2\ell+2} \sum_{(\pi_3, \pi_4) \in \Pi_{\pi_1, 
\pi_2}^{\ell,j,j'}(n_1, n_2)}$} \mathbb{P}(H_{\pi_3, \pi_4}|H_{\pi_1, \pi_2}) = O(1)\,.
\end{equation*}
This implies that
$$ \mbox{$ \sum_{(\pi_1, \pi_2), (\pi_3, \pi_4)}  $}\mathbb P(H_{\pi_1, \pi_2} \cap H_{\pi_3, 
\pi_4}) = O(1) \E  N'_\ell\,.$$
where the sum is over all such pairs such that $ |E(\pi_1 \cup \pi_2) \cap E(\pi_3 \cup \pi_4)| 
\neq \emptyset$. In addition, 
$$ \mbox{$ \sum_{(\pi_1, \pi_2), (\pi_3, \pi_4)}  $}\mathbb P(H_{\pi_1, \pi_2} \cap H_{\pi_3, 
\pi_4}) = (1+o(1))( \E  N'_\ell)^2\,.$$
where the sum is over all such pairs such that $ |E(\pi_1 \cup \pi_2) \cap E(\pi_3 \cup \pi_4)|  =  
\emptyset$ (thus in this case $H_{(\pi_1, \pi_2)}$ is independent of $H_{(\pi_3, \pi_4)}$). 
Combined with the fact that $\E N'_\ell = \Theta(n)$, it gives that $\E ((N'_\ell)^2) = (1+o(1)) 
(\E N'_\ell)^2$. At this point, another application of Markov's inequality completes the proof of 
the lemma.
\end{proof}
We are now well-equipped to prove the main lemma of this subsection. For convenience of notation, 
write 
\begin{equation}\label{eq-def-f-l-eta}
f(\ell, \eta) = f_{\zeta_2}(\ell, \eta) = \mathrm{e}^{-1000\zeta_2/\sqrt{\eta}}\sqrt{\eta^3 / 
\ell^7}\,.
\end{equation}
\begin{lemma}
\label{lemma_crucial2}
Assume the same conditions on $\zeta_1, \zeta_2$, $\ell$ and $\eta$ as in 
Lemma~\ref{lemma_crucial1}. Let $S_{n, \eta, \ell} = S_{n, \eta, \ell; \zeta_1, \zeta_2}$ be a set 
with maximum cardinality among all subsets of $\Pi_\ell^*$ containing only pairwise disjoint good 
paths. Then there exists an absolute constant  $C_6>0$ such that, 
\begin{equation}
\label{num_dis_good}
\mathbb{P}(|S_{n, \eta, \ell}| \geq C_6f(\ell, \eta) n) \to 1\mbox{ as }n\to\infty\,.
\end{equation}
\end{lemma}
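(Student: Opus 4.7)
The plan is to apply Tur\'{a}n's theorem (Lemma~\ref{relation}) to the intersection graph $\mathcal{G}_n$ defined just before \eqref{expected_edge}. By construction, $|V(\mathcal{G}_n)| = N_\ell^*$ and $2|E(\mathcal{G}_n)| + |V(\mathcal{G}_n)| = N_\ell'$, so Lemma~\ref{relation} immediately gives the deterministic bound
\begin{equation*}
|S_{n, \eta, \ell}| \;\geq\; \frac{(N_\ell^*)^2}{N_\ell'}\,.
\end{equation*}
All that remains is to replace $N_\ell^*$ and $N_\ell'$ by their expectations (via concentration) and to plug in the bound on $\mathbb{E} N_\ell'$ obtained earlier.

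For the first step, I would invoke Lemma~\ref{lemma_crucial3}. On the high-probability event on which both $|N_\ell^* - \mathbb{E} N_\ell^*| \leq g_{\ell,\eta}(n) \mathbb{E} N_\ell^*$ and $|N_\ell' - \mathbb{E} N_\ell'| \leq g_{\ell,\eta}(n) \mathbb{E} N_\ell'$ hold, we have
\begin{equation*}
\frac{(N_\ell^*)^2}{N_\ell'} \;\geq\; \frac{(1 - g_{\ell,\eta}(n))^2}{1 + g_{\ell,\eta}(n)} \cdot \frac{(\mathbb{E} N_\ell^*)^2}{\mathbb{E} N_\ell'} \;=\; (1 - o(1))\,\frac{(\mathbb{E} N_\ell^*)^2}{\mathbb{E} N_\ell'}\,.
\end{equation*}

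For the second step, I would simply invoke the bound \eqref{expected_edge}, namely $\mathbb{E} N_\ell' \leq C_5(1+o(1))\, \mathrm{e}^{1000\zeta_2/\sqrt{\eta}}\, \sqrt{\ell^{7}/\eta^3}\, (\mathbb{E} N_\ell^*)^2/n$, which gives
\begin{equation*}
\frac{(\mathbb{E} N_\ell^*)^2}{\mathbb{E} N_\ell'} \;\geq\; \frac{n}{C_5(1+o(1))\, \mathrm{e}^{1000\zeta_2/\sqrt{\eta}}\, \sqrt{\ell^{7}/\eta^3}} \;=\; (1-o(1))\, C_5^{-1}\, f(\ell, \eta)\, n\,,
\end{equation*}
where the final equality uses the definition \eqref{eq-def-f-l-eta} of $f(\ell,\eta) = \mathrm{e}^{-1000\zeta_2/\sqrt{\eta}}\sqrt{\eta^3/\ell^7}$.

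Combining the two displays, with probability tending to $1$ we get $|S_{n,\eta,\ell}| \geq C_6 f(\ell, \eta) n$ for any absolute constant $C_6 < C_5^{-1}$ (say $C_6 = 1/(2C_5)$), which is the statement of the lemma. There is no genuine obstacle here: the work has already been done in establishing the first/second-moment estimate \eqref{expected_edge} and the concentration result Lemma~\ref{lemma_crucial3}, and this lemma is simply the bookkeeping step that packages them through Tur\'{a}n's inequality. The only thing to check carefully is that $C_6$ can be chosen as an absolute constant (independent of $\eta$, $\ell$, $\zeta_1$, $\zeta_2$), which is immediate since $C_5$ in \eqref{expected_edge} is absolute.
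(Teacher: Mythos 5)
Your proof is correct and follows essentially the same route as the paper: apply Tur\'an's theorem (Lemma~\ref{relation}) to $\mathcal{G}_n$ using $N_\ell^* = |V(\mathcal{G}_n)|$ and $N_\ell' = 2|E(\mathcal{G}_n)| + |V(\mathcal{G}_n)|$, then substitute the concentration estimates from Lemma~\ref{lemma_crucial3} together with the upper bound \eqref{expected_edge} on $\mathbb{E}N_\ell'$, yielding $C_6 = 1/(2C_5)$ exactly as in the paper.
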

\begin{proof}
Let $h(\ell, \eta) = C_5\mathrm{e}^{1000\zeta_2/\sqrt{\eta}}\sqrt{\ell^7 / \eta^3}$. By 
Lemma~\ref{lemma_crucial3} and \eqref{expected_edge}, we assume without loss of generality that 
$$|N^*_\ell - \mathbb{E}N^*_{\ell}| \leq g_{\ell,\eta}(n)\mathbb{E}N^*_{\ell} \mbox{ and }
  N_\ell' \leq  (1 + o(1))h(\ell, \eta)\tfrac{(\mathbb{E}N^*_\ell)^2}{n} (1 + g_{\ell,\eta}(n)),$$
where $g_{\ell, \eta}(n)$ is defined as in Lemma~\ref{lemma_crucial3}. Since $N_\ell' = 2|
E(\mathcal{G}_n)| + |V(\mathcal{G}_n)|$, by Lemma~\ref{relation} we get that the graph 
$\mathcal{G}_n$ has an independent subset of size at least
$$ {N^*_\ell}^2 / N_{\ell}' \geq n(1+o(1))/h(\ell,\eta).$$
Therefore, with high probability $|S_{n, \eta, \ell}| \geq n/2h(\ell,\eta)$ which leads to 
\eqref{num_dis_good} for $C_6= 1/2C_5$.
\end{proof}

\subsection{Connecting short light paths into a long one}
We set $\zeta_1 = 1/5$ and $\zeta_2 = 1 + \sqrt{2C^*/\mathrm{e}}$ in this subsection. Note that 
this choice satisfies the conditions in Lemma~\ref{lemma_crucial2}. Denote by $\mathcal{E}_{n, 
\eta, \ell}$ the event $\{|S_{n, \eta, \ell}| \geq C_6f(\ell, \eta) n)\}$.\\
The remaining part of our scheme is to connect a fraction of these disjoint good paths in a 
suitable way to form a light and long path $\gamma$. In order to describe our algorithm for the 
construction of $\gamma$, we need a few more notations. Denote the vertex sets 
$V(\mathcal{W}_n^*)$ and $V(\mathcal{W}_n)\setminus V(\mathcal{W}_n^*)$ by $V_1$ and $V_2$ 
respectively. Let $\delta > 0$ be a number and $\nu > 0$ be an integer satisfying 
\begin{equation}\label{algo_inequation}
1 \leq \delta n /\ell \leq |S_{n, \eta, \ell}|\mbox{ and }\delta n \nu / \ell \leq |V_2|.
\end{equation}
Now label the paths in $S_{n, \eta, \ell}$ as $\pi_1, \pi_2, \ldots$ in some arbitrary way. Our 
aim is to build up the path $\gamma$ in step-by-step fashion starting from $\pi_1$. In each step 
we will connect $\gamma$ to some $\pi_j$ by a path of length 2 whose \emph{middle} vertex is in 
$V_2$. These paths will be referred to as \emph{bridges}. To leverage additional flexibility we 
also demarcate two segments of length $\lfloor \ell/4 \rfloor$ one on each end of the paths 
$\pi_j$'s which we call \emph{end segments}. 
These end segments will allow us to ``choose'' endpoints of $\pi_j$'s while connecting them (as 
such, it is possible that we only keep half of the vertices of $\pi_j$ in $\gamma$). A vertex $v$ 
will be said to be adjacent to a path or an edge if it is an endpoint of that path or edge. If an 
edge $e$ has exactly one endpoint in $S$, we denote that endpoint by $v_{e,S}$. The following 
algorithm, referred to as $\mathrm{BRIDGE}(\nu, \ell, \delta)$, will construct a long path 
$\gamma$. 
See Figure \ref{fig:fig1} for an illustration.
\smallskip

\textbf{Initialization.} $\gamma = \pi_1$, $T$ is the set of all vertices which are in end 
segments of $\pi_j$'s for $j \geq 2$, $M = V_2$, $P = \emptyset$ and designate an end segment 
of $\gamma$ as the open end $\gamma_O$. Also let $v$ be the endpoint of $\gamma$ \textbf{not} in 
$\gamma_O$.

Now repeat the following sequence of steps $\lfloor \delta n / \ell \rfloor - 1$ times:

\textbf{Step 1.} Repeat $\nu$ times: find the lightest edge $e$ between $\gamma_O$ and $M$, remove 
$v_{e,M}$ from $M$ and include it in $P$. These $\nu$ edges will be called predecessor edges 
(so at the end of this step, $|P| = \nu$).

\vspace{0.05 cm}

\textbf{Step 2.} Find the lightest edge between $P$ and $T$. Call it $e'$. Then $v_{e',T}$ 
comes from an end segment of some path in $S_{n, \eta, \ell}$, say $\pi$.

\vspace{0.05 cm}

\textbf{Step 3.} The edge $e'$ and the \emph{unique} predecessor edge adjacent to $v_{e',P}$ 
defines a path $b$ of length 2 (so $b$ connects a vertex in $\gamma_O$ to a vertex in $\pi$). Let 
$w$ be the endpoint of $\pi$ not in the end segment that $v_{e',T}$ came from. Then there is a 
unique path $\gamma'$ in the tree $\gamma \cup b \cup \pi$ between $v$ and $w$. Set $\gamma = 
\gamma'$ and $\gamma_O =$ the end segment of $\pi$ containing $w$.

\vspace{0.05 cm}

\textbf{Step 4.} Remove the vertices on the end segments of $\pi$ from $T$ and reset $P$ at 
$\emptyset$.\newline 
\begin{figure}[!ht]
 \centering\includegraphics[width=0.8\textwidth]{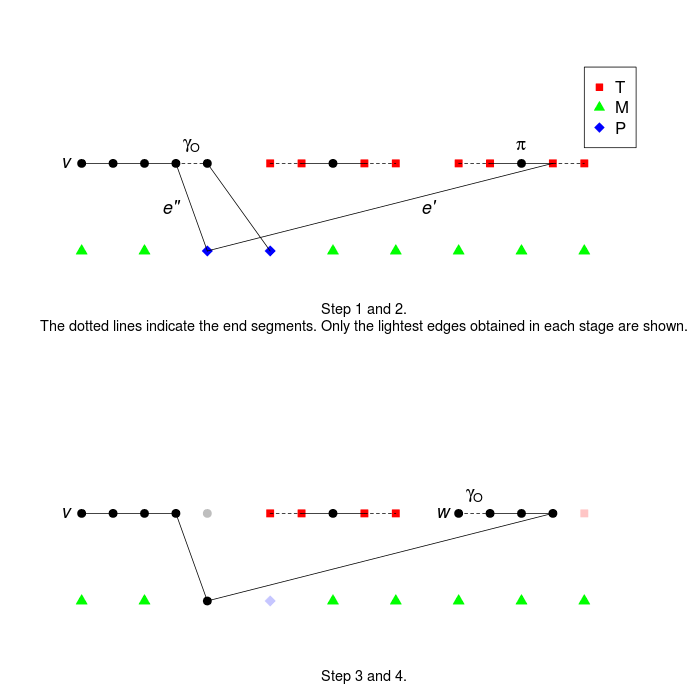}
 \caption{{\bf Illustrating an iteration of BRIDGE for $\nu = 2$ and $\ell = 4$.} The edges $e'$ 
 and $e''$ define the path $b$. So in this iteration the paths $\gamma$ and $\pi$ are shortened 
 slightly before being joined via $b$.}
 \label{fig:fig1}
\end{figure}
\newline
Notice that the conditions in \eqref{algo_inequation} ensure that we never run out of vertices in 
$T$ or $M$ during first $\lfloor \delta n / \ell \rfloor - 1$ iterations of steps 1 to 4. Thus 
what we described above is a valid algorithm for such choices of $\delta$ and $\nu$. Denote the 
length and average weight of the path $\gamma$ generated by $\mathrm{BRIDGE}(\nu, \ell, \delta)$ 
as $L_{\text{BRIDGE}}(\nu, \ell, \delta)$ and $A_{\text{BRIDGE}}(\nu, \ell, \delta)$ respectively 
when $\delta, \nu, \ell$ satisfy these inequalities. For sake of completeness we may define these 
quantities to be $0$ and $\infty$ respectively and regard the output path $\gamma$ as ``empty'' if 
any one of the inequalities in \eqref{algo_inequation} fails to hold. We are now just one lemma 
short of proving the lower bound in \eqref{main_prop}.
\begin{lemma}
\label{algo}
For any $0 < \eta < \eta_2$ where $\eta_2 > 0$ is an absolute constant there exist positive 
integers $\nu = \nu(\eta)$, $\ell = \ell(\eta) \geq \zeta_2^2 / \eta$ and a positive number 
$\delta = \delta(\eta)$ such that
$$\mathbb{P}\big(L_{\text{BRIDGE}}(\nu, \ell, \delta) \geq \mathrm{e}^{-C_{7}/\sqrt{\eta}} n\mbox{ 
and }A_{\text{BRIDGE}}(\nu, \ell, \delta) \leq 1/\mathrm{e} + 12\eta \mid \mathcal{E}_{n, \eta, 
\ell}\big) \to 1$$
as $n$ tends to infinity. Here $C_{7}>0$ is an absolute constant.
\end{lemma}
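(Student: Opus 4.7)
The plan is to select the parameters $\ell,\nu,\delta$ carefully and then bound the length and total weight of the output $\gamma$ on the event $\mathcal{E}_{n,\eta,\ell}$. The key observation enabling the analysis is that the bridge edges live between $V_1 = V(\mathcal{W}_n^*)$ and $V_2 = V(\mathcal{W}_n)\setminus V_1$, while the collection $S_{n,\eta,\ell}$ and the event $\mathcal{E}_{n,\eta,\ell}$ depend only on edges with both endpoints in $V_1$. Therefore, conditionally on $\mathcal{E}_{n,\eta,\ell}$ and on any realization of $S_{n,\eta,\ell}$, the weights of the $\lvert V_1\rvert\lvert V_2\rvert$ bipartite edges are still i.i.d.\ $\mathrm{Exp}(1/n)$, reducing the analysis of BRIDGE to an order-statistic computation on fresh exponentials.

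Concretely, I would choose $\ell=\ell(\eta)\sim K_1\eta^{-9}\mathrm{e}^{K_2/\sqrt{\eta}}$ with large constants $K_1,K_2$ (so $\ell\geq \zeta_2^2/\eta$ from Lemma~\ref{lemma_crucial2} is trivially met), $\delta=\delta(\eta)\sim\tfrac{1}{2}C_6 f(\ell,\eta)\ell$ (maximal subject to \eqref{algo_inequation}), and $\nu=\nu(\eta)\sim\sqrt{\eta/f(\ell,\eta)}$, chosen to balance the two bridge-weight contributions below. With these choices \eqref{algo_inequation} holds on $\mathcal{E}_{n,\eta,\ell}$. In each of the $K:=\lfloor\delta n/\ell\rfloor-1$ iterations, Step 1 selects the $\nu$ smallest among $\lvert\gamma_O\rvert\lvert M\rvert\sim \ell\eta n/20$ i.i.d.\ $\mathrm{Exp}(1/n)$ weights, and Step 2 picks the minimum of $\nu\lvert T\rvert$ such weights with $\lvert T\rvert\gtrsim C_6 f(\ell,\eta)n\ell/4$ throughout (thanks to the factor $\tfrac{1}{2}$ in $\delta$). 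Standard estimates for order statistics of exponentials yield expected weight $\sim \nu/(\ell\eta)$ for the single predecessor edge used per bridge (because the $P$-vertex chosen in Step 2 is essentially uniform among the $\nu$ candidates, since the connecting edges are independent of the predecessor edges) and $\sim 1/(\nu f\ell)$ for the connecting edge. Summing over iterations and substituting the balanced $\nu$, the expected total bridge weight is of order $\delta n/(\ell^2\sqrt{\eta\, f(\ell,\eta)})$; a direct substitution shows that for $K_1,K_2$ sufficiently large this is at most $3\eta\delta n$, and Markov's inequality upgrades the bound to a high-probability one.

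For the final length and average weight, note that each good path used contributes at least $\ell/2$ edges to $\gamma$ (only its two end segments of size $\lfloor\ell/4\rfloor$ may be trimmed), so $L_\gamma\geq K\ell/2+2K\geq \delta n/4$. Using $M(\pi)\leq \zeta_2/\sqrt{\eta}$ on good paths, the weight kept from each $\pi_j$ is at most $\lambda$ times the length kept plus $2\zeta_2/\sqrt{\eta}$, which yields
\[
A_{\mathrm{BRIDGE}}\leq \lambda+\frac{2K\zeta_2/\sqrt{\eta}+W_b}{L_\gamma}\leq \tfrac{1}{\mathrm e}+\eta+\frac{8\zeta_2}{\ell\sqrt{\eta}}+\frac{4W_b}{\delta n}\leq \tfrac{1}{\mathrm e}+12\eta
\]
for $\ell$ (equivalently $K_1,K_2$) large. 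Similarly, $\delta\gtrsim \eta^{24}\mathrm{e}^{-6000\zeta_2/\sqrt{\eta}}\geq \mathrm{e}^{-C_7/\sqrt{\eta}}$ for any $C_7>6000\zeta_2$ and small $\eta$, since $\sqrt{\eta}\log(1/\eta)\to 0$. The principal obstacle is propagating the order-statistic estimates through the sequential, path-dependent iterations: the sets $\gamma_O,M,T$ at iteration $k$ depend on earlier random outcomes, so one cannot simply quote a single order-statistic lemma. However, BRIDGE observes only $O(\nu K)=o(\eta n)$ edges in total, negligible relative to the $\sim \eta n^2$ edges between $V_1$ and $V_2$, so the unobserved weights remain well-modelled by fresh exponentials, and an iteration-by-iteration application of a standard concentration inequality combined with a union bound over $k\leq K$ closes the argument.
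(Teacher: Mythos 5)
Your overall plan matches the paper's: bound the average weight by combining the contribution from the retained good-path segments (using the maximum deviation bound $M(\pi)\leq\zeta_2/\sqrt{\eta}$) with the bridge weights, estimate the expected bridge weight via order statistics of exponentials, balance/tune $\nu,\ell,\delta$, and close with a Chebyshev-type argument. The parameter choices you propose are compatible with the paper's ($\ell\sim\mathrm{e}^{2001\zeta_2/\sqrt{\eta}}$, $\nu\sim\zeta_1(\ell\eta)^2/11$, $\delta\sim\mathrm{e}^{-7000\zeta_2/\sqrt{\eta}}$), and your order-of-magnitude bookkeeping checks out. You also correctly identify that $\mathcal{E}_{n,\eta,\ell}$ and $S_{n,\eta,\ell}$ are measurable with respect to the edges inside $V_1$, while BRIDGE only touches $V_1$--$V_2$ edges, so those edges are indeed fresh conditionally on $\mathcal{E}_{n,\eta,\ell}$.

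The genuine gap is in how you handle the sequential dependence across iterations. Your claim that ``BRIDGE observes only $O(\nu K)=o(\eta n)$ edges in total'' is not correct: to locate the $\nu$ lightest edges between $\gamma_O$ and $M$ in Step~1, the algorithm must condition on the weights of \emph{all} $|\gamma_O|\cdot|M|\sim\tfrac14\ell\cdot\tfrac15\eta n$ bipartite edges incident to the current $\gamma_O$; over $K$ iterations this is order $\delta\eta n^2$ edges, a constant (if small) fraction of the $V_1$--$V_2$ bipartite edges, not $o(\eta n)$. Consequently the ``unobserved weights remain well-modelled by fresh exponentials'' heuristic is not a quantitative argument — you would need a coupling or total-variation estimate to make it rigorous, and it is unclear such an estimate closes the argument cleanly. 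The paper instead uses a sharper structural observation that makes the approximation unnecessary: at the start of each iteration, $\gamma_O$ is a brand-new end segment (its vertices did not appear in any previous iteration's $\gamma_O$ or $P$), so the edges between $\gamma_O$ and $M$ are \emph{exactly} unexplored; similarly, after Step~1, $P$ is a fresh set of vertices and the edges between $P$ and $T$ are exactly unexplored. Hence each bridge weight is \emph{exactly} an independent order-statistic functional, not approximately so, and one Chebyshev bound on the sum of independent bridge weights suffices — no union bound over iterations, and no approximation of conditioned distributions. You should replace the ``negligible fraction'' argument with this disjointness-of-explored-edges observation; the rest of your computation then goes through as intended.

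One smaller point: ``Markov's inequality upgrades the bound to a high-probability one'' should be made precise as Chebyshev, using the independence of bridge weights across iterations and the fact that each bridge weight has variance bounded by a quantity depending only on $(\eta,\ell,\nu)$ (and not on $n$), so the total variance is $O(K)$ while the mean is $\Theta(K\ell\eta)$ and the allowed slack is $\Theta(K\ell\eta)$, giving probability $O(1/K)\to 0$.
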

\begin{proof}
We will omit the phrase ``conditioned on $\mathcal{E}_{n, \eta, \ell}$'' while talking about 
probabilities in this proof (barring formal expressions) although that is to be implicitly 
assumed. We use $\mathrm{Exp}(1 / \theta)$ to denote the distribution of an exponential random 
variable with mean $\theta > 0$. Define $\mathcal B_{n, \eta, \nu, \ell, \delta}$ to be the 
event that the total weight of bridges does not exceed $3\ell\eta \times \lfloor \delta n/\ell 
\rfloor$. Notice that if any one of the inequalities in \eqref{algo_inequation} does not hold, 
$\gamma$ is ``empty'' and hence $\mathcal B_{n, \eta, \nu, \ell, \delta}$ is a sure event. Suppose 
$\delta, \nu$ and $\ell$ are such that \eqref{algo_inequation} is satisfied.  We will first bound 
the average weight $A(\gamma)$ of $\gamma$ assuming that $\mathcal B_{n, \eta, \nu, \ell, \delta}$ 
occurs. Let $\ell_i$ be the length of the segment selected by the algorithm in the $i$-th 
iteration. We see that its weight can be no more than $\lambda \ell_i + 2\zeta_2 / \sqrt{\eta}$, 
since the segment is chosen from a good path of average weight at most $\lambda$ and maximum 
deviation from its linear interpolation is at most $\zeta_2 / \sqrt{\eta}$ (see \eqref{good_event} 
as well as the proof for Lemma~\ref{conditional_prob}). Thus the total weight of edges in $\gamma$ 
from the good paths is bounded by $\lambda L + \lfloor \delta n / \ell \rfloor.(2\zeta_2 / 
\sqrt{\eta})$ where $L = \sum_{i}\ell_i$. Adding this to the total weight of bridges we get with 
probability tending to 1 as $n\to \infty$
\begin{equation*}
W(\gamma) \leq L( 1 / \mathrm{e} + \eta) + \lfloor \delta n / \ell \rfloor \cdot (2\zeta_2 / 
\sqrt{\eta}) + \lfloor \delta n/\ell \rfloor3\ell\eta.
\end{equation*}
Since the algorithm selects at least $\ell / 2$ edges from each of the $\lfloor \delta n / \ell 
\rfloor$ good paths it connects, we have $\ell_i\geq \ell/2$ for each $i$ and thus $L \geq \lfloor 
\delta n / \ell \rfloor \times \ell/2$. Therefore, 
\begin{eqnarray*}
A(\gamma) 
       &\leq & 1/\mathrm{e} + \eta + \lfloor \delta n / \ell \rfloor.(2\zeta_2 / L\sqrt{\eta}) + 
       \lfloor \delta n/\ell \rfloor3\ell\eta / L       
       \\
       &\leq & 1 / \mathrm{e} + \eta + 4\zeta_2 / \ell\sqrt{\eta} + 6\eta\,.
\end{eqnarray*}
If $\ell\geq \zeta_2 / \eta^{3/2}$, then from the last display we can conclude $A(\gamma) \leq 1 / 
\mathrm{e} + 12\eta$. We can assume this restriction on $\ell$ for now. Indeed, later we will 
specify the value of $\ell$ and it will satisfy the condition $\ell\geq \zeta_2 / \eta^{3/2}$.\par

So it remains to find positive numbers $\delta, \nu, \ell$ as functions of $\eta$ and an absolute 
constant $\eta_2 > 0$ such that the following three hold for all $0 < \eta < \eta_2$: (a) 
$\mathbb{P}(\mathcal B_{n, \eta, \nu, \ell, \delta} \mid \mathcal E_{n, \eta, \ell}) \rightarrow 
1$ as $n \to \infty$, (b) $\ell \geq \zeta_2 / \eta^{3/2} \vee \zeta_2^2 / \eta$ (see the 
statement of the lemma as well as the last paragraph) and (c) $\gamma$ has the desired length. In 
the next paragraph we will find a triplet $(\delta, \nu, \ell)$ and an absolute constant $\eta_2'' 
> 0$ such that (a) holds for $0 < \eta < \eta_2''$. In the final paragraph we will show that our 
choice of $(\delta, \nu, \ell)$ also satisfies (b) and (c) whenever $0 < \eta < \eta_2$ where 
$\eta_2 < \eta_2''$ is an absolute constant.\par

Let us begin with the crucial observation that, at the start of each iteration the edges between 
$M$ and $\gamma_O$ are still unexplored. The same is true for the edges between $P$ and $T$ at the 
end of Step 1 in any iteration. Consequently their weights are i.i.d.\ $\mathrm{Exp}(1/n)$ 
regardless of the outcomes from the previous iterations. Therefore, all the bridge weights are 
independent of each other. Now suppose the mean and variance of each bridge weight can be bounded 
above by $2\ell\eta$ and $\sigma^2$ respectively and we emphasize that the latter does not depend 
on $n$. By Markov's inequality it would then follow that $\lim_{n \to \infty}\mathbb{P}(\mathcal 
B_{n, \eta, \nu, \ell, \delta} \mid \mathcal E_{n, \eta, \ell}) = 1$. To that end let us consider 
the bridge obtained from the $m$-th iteration where $1\leq m\leq \lfloor \delta n/\ell \rfloor - 
1$. Note that here we implicitly assume \eqref{algo_inequation}, but this would be shortly shown 
to be implied by some other constraints involving $\delta, \nu$ and $\ell$. Let $e'$ be the 
lightest edge between $P$, $T$ in Step 2 and $e$ be the predecessor edge adjacent to $e'$ (for 
this iteration). So the bridge weight is simply $W_{e'} + W_e$. By discussions on independence at 
the beginning of the proof, it follows that $W_{e'}$ and $W_e$ are independent of each other and 
also of the weights of bridges already chosen. Since these weights are minima of some collections 
of i.i.d.\ exponentials, they will be of small magnitude provided that we are minimizing over a 
large collection of exponentials, i.e., $|T|$, $|M|$ and $\nu$ are big. It follows from the 
description of the algorithm that at each iteration we lose $2\lfloor \ell/4\rfloor$ many vertices 
from $T$ and $\nu$ many vertices from $M$. By simple arithmetic we then get,
\begin{equation}\label{eq-size-P1P2}
|T| \geq C_6\lfloor\ell /4\rfloor f(\ell,\eta)n \mbox{ and }|M| \geq \zeta_1 \eta n/ 2\,,
\end{equation}
for all $1 \leq m \leq \lfloor \delta n / \ell\rfloor - 1$ provided
\begin{equation}
\label{eq-delta-zeta-0}
\delta \leq C_6 f(\ell,\eta)\ell / 2 \mbox{ and } \nu \delta / \ell \leq \zeta_1 
\eta / 2\,.
\end{equation}
Notice that these inequalities automatically imply $\delta n / \ell \leq C_6 f(\ell, \eta)n$ and 
$\delta n \nu / \ell \leq |V_2|$. Thus if $\delta, \nu, \ell$ satisfy \eqref{mean_bnd2}, 
\eqref{algo_inequation} would also be satisfied for all large $n$ (given $\delta, \ell$). Assume 
for now that \eqref{eq-delta-zeta-0} holds. Since $W_{e'}$ is minimum of $\nu \times |T|$ many 
independent $\mathrm{Exp}(1/n)$ random variables, it is distributed as $\mathrm{Exp}(\nu|T| / n)$. 
As for $W_{e}$, it is bounded by the maximum weight of the $\nu$ predecessor edges. From 
properties of exponential distributions and description of the algorithm it is not difficult to 
see that this maximum weight is distributed as $E_1 + E_2 + \ldots E_\nu$, where $E_{i+1}$ is 
exponential with rate $(|M| - i)\times 1 / n\times \lfloor \ell / 4\rfloor$. By \eqref{eq-size-P1P2}, we can then bound the expected weight of the bridge from above by
\begin{eqnarray}\label{mean_bnd1}
\tfrac{1}{C_6\lfloor\ell /4\rfloor f(\ell,\eta)n} \times \tfrac{1}{\nu} \times n\mbox{ }+\mbox{ 
}\tfrac{\nu}{\big(\frac{\zeta_1\eta}{2} - \tfrac{\nu}{n}\big)\lfloor\ell /4\rfloor} 
\leq  \tfrac{5}{C_6\nu\ell f(\ell,\eta)} + \tfrac{11\nu}{\zeta_1\eta \ell},
\end{eqnarray}
where the last inequality holds for $\ell \geq 20$ and large $n$ (given $\eta$, $\nu$). 
By the same line of arguments, we get that the its variance is bounded by a number that depends 
only on $\eta$, $\ell$ and $\nu$ (so in particular independent of $n$). To make the right hand 
side of \eqref{mean_bnd1} bounded above by $2\ell\eta$, we may require each of the summands in 
\eqref{mean_bnd1} to be bounded by $\ell\eta$. After a little simplification this amounts to
\begin{equation}
\nu \geq 5 / C_6\ell^2\eta f(\ell, \eta) \,, \mbox{ and } \zeta_1 (\ell\eta)^2 \geq 11 \nu\,.
\label{mean_bnd2}
\end{equation}
So we need to pick a positive $\delta = \delta(\eta)$, positive integers $\nu = \nu(\eta), \ell = 
\ell(\eta)$ and an absolute constant $\eta_2'' > 0$ such that \eqref{eq-delta-zeta-0} and 
\eqref{mean_bnd2} hold for $0 < \eta < \eta_2''$. We will deal with \eqref{mean_bnd2} first which 
is in fact equivalent to
\begin{equation}
\label{ineq_solve_1}
\zeta_1(\ell \eta)^2 / 11 \geq \nu \geq 5 / C_6\ell^2f(\ell, \eta)\,.
\end{equation}
Let us try to find an integer $\ell$ satsfying $\zeta_1(\ell \eta)^2 / 11 \geq \big(10 / 
C_6\ell^2f(\ell, \eta)\big)\vee 2$ since this will ensure the existence of a positive integer 
$\nu$ such that $\nu, \ell$ satisfy \eqref{ineq_solve_1}. Using $f(\ell, \eta) = 
\mathrm{e}^{-1000\zeta_2/ \sqrt{\eta}}\sqrt{\eta^3 / \ell^7}$, we get that this amounts to
$$\ell \geq \tfrac{C_7'\mathrm{e}^{2000\zeta_2 / \sqrt{\eta}}}{\eta^9} \vee \tfrac{C_7''}{\eta}\,,
$$
for some positive, absolute constants $C_7'$ and $C_7''$. Hence there exists an absolute constant 
$\eta_2''' > 0$ such that the integers $\ell = \lceil \mathrm{e}^{2001\zeta_2 / \sqrt{\eta}} 
\rceil$ and $\nu = \lfloor \zeta_1(\ell \eta)^2/ 11 \rfloor$ satisfy \eqref{ineq_solve_1} whenever 
$0 < \eta < \eta_2'''$. Now we need to find $\delta$ that would satisfy \eqref{eq-delta-zeta-0} 
which can be rewritten as,

\begin{equation}
\label{ineq_solve_2}
\delta \leq (C_6 f(\ell, \eta)\ell / 2) \wedge (\zeta_1\eta\ell/ 2\nu)\,.
\end{equation}
Again substituting $f(\ell, \eta) = \mathrm{e}^{-1000\zeta_2/ \sqrt{\eta}}\sqrt{\eta^3 / \ell^7}$, 
we can simplify \eqref{ineq_solve_2} to
\begin{equation}
\label{ineq_solve_3}
\delta \leq (C_6 \mathrm{e}^{-1000\zeta_2/\sqrt{\eta}}\tfrac{\eta^{3/2}}{2\ell^{5/2}}) \wedge 
(\zeta_1\eta\ell/ 2\nu)\,.
\end{equation}
Since $\nu = \lfloor \zeta_1(\ell \eta)^2/ 11 \rfloor$, \eqref{ineq_solve_3} would be satisfied if
$$\delta \leq (C_6 \mathrm{e}^{-1000\zeta_2/\sqrt{\eta}}\tfrac{\eta^{3/2}}{2\ell^{5/2}}) \wedge 
(11 / 2\ell \eta)\,.$$
The last display together with our particular choice of $\ell$ i.e. $\lceil 
\mathrm{e}^{2001\zeta_2 / \sqrt{\eta}} \rceil$ imply that there exists a positive, absolute 
constant $\eta_2'' < \eta_2'''$ such that $\delta = \mathrm{e}^{-7000\zeta_2 / \sqrt{\eta}}$ 
satisfies \eqref{ineq_solve_2} for $0 < \eta < \eta_2''$. Thus our choice of the triplet $(\delta, 
\nu, \ell)$ satisfies \eqref{eq-delta-zeta-0} and \eqref{mean_bnd2} for $0 < \eta < \eta_2''$ and 
consequently the event $\mathcal B_{n, \eta, \nu, \ell, \delta}$ occurs with high probability for 
this choice.\par

As to the constraint on $\ell$, it is also clear that there exists a positive, absolute constant 
$\eta_2' < \eta_2''$ such that $\ell = \lceil \mathrm{e}^{2001\zeta_2 / \sqrt{\eta}} \rceil$ is 
larger than $\zeta_2 / \eta^{3/2} \vee \zeta_2^2 / \eta$ for all $0 < \eta < \eta_2'$. Finally it 
is left to ensure whether $\gamma$ has the length required by the lemma. Since our particular 
choice of the triplet $(\delta, \nu, \ell)$ satisfies \eqref{algo_inequation} for large $n$ (given 
$\eta$), we have that $L_{\text{BRIDGE}}(\nu, \ell, \delta) \geq \lfloor \delta n / \ell \rfloor 
\times \ell/2$. It then follows that there exists a positive, absolute constant $\eta_2 < \eta_2'$ 
such that $L_{\text{BRIDGE}}(\nu, \ell, \delta) \geq \mathrm{e}^{-7001\zeta_2/ \sqrt{\eta}}n$ for 
these particular choices of $\nu, \ell$ and $\delta$ whenever $0 < \eta < \eta_2$ and $n$ is large 
(given $\eta$). This completes the proof of the lemma.

\end{proof}

Combining Lemmas \ref{lemma_crucial2} and \ref{algo} completes the proof of the lower bound in 
Theorem~\ref{Prop}.


\begin{thebibliography}{10}

\bibitem{Aldous98}
D.~Aldous.
\newblock On the critical value for ``percolation'' of minimum-weight trees in
  the mean-field distance model.
\newblock {\em Combin. Probab. Comput.}, 7(1):1--10, 1998.

\bibitem{Aldous05}
D.~J. Aldous.
\newblock Percolation-like scaling exponents for minimal paths and trees in the
  stochastic mean field model.
\newblock {\em Proc. R. Soc. Lond. Ser. A Math. Phys. Eng. Sci.},
  461(2055):825--838, 2005.
    

\bibitem{Dasgupta2011}
A.~Dasgupta.
\newblock Probability for Statistics and Machine Learning: Fundamentals
  and Advanced Topics.
\newblock {\em Springer Texts in Statistics}, 2011.

\bibitem{Ding13}
J.~Ding.
\newblock Scaling window for mean-field percolation of averages.
\newblock {\em Ann. Probab.}, 41(6):4407--4427, 2013.

\bibitem{DSW}
J.~Ding, N.~Sun, and D.~B. Wilson.
\newblock Preprint, available at http://arxiv.org/abs/1504.00918.

\bibitem{Erdos70}
P.~Erd{\H{o}}s.
\newblock On the graph theorem of {T}ur\'an.
\newblock {\em Mat. Lapok}, 21:249--251 (1971), 1970.

\bibitem{Frieze85}
A.~M. Frieze.
\newblock On the value of a random minimum spanning tree problem.
\newblock {\em Discrete Appl. Math.}, 10(1):47--56, 1985.



\bibitem{KM89}
W.~Krauth and M.~M{\'e}zard.
\newblock The cavity method and the travelling salesman problem.
\newblock {\em Europhys. Lett.}, 8(3):213--218, 1989.

\bibitem{MW13}
C.~Mathieu and D.~B. Wilson.
\newblock The min mean-weight cycle in a random network.
\newblock {\em Combin. Probab. Comput.}, 22(5):763--782, 2013.

\bibitem{MP86b}
M.~M{\'e}zard and G.~Parisi.
\newblock Mean-field equations for the mathcing and travelling salesman
  problems.
\newblock {\em Europhys. Lett.}, 2:913--918, 1986.

\bibitem{MP86}
M.~M{\'e}zard and G.~Parisi.
\newblock A replica analysis of travelling salesman problem.
\newblock {\em Journal de physique}, 47(3):1285--1296, 1986.

\bibitem{Wastlund10}
J.~W{\"a}stlund.
\newblock The mean field traveling salesman and related problems.
\newblock {\em Acta Math.}, 204(1):91--150, 2010.

\bibitem{Bollobas04}
B.~Bollob\'{a}s, D.~Gamarnik, O.~Riordan and B.~Sudakov.
\newblock On the value of a random minimum weight Steiner tree.
\newblock {\em Combinatorica}, 24(2):187--207, 2004.  

\end{thebibliography}
\end{document}